\numberwithin{equation}{section}
\newtheorem{theorem}{Theorem}[section]
\newtheorem{lemma}[theorem]{Lemma}
\newtheorem{definition}[theorem]{Definition}
\newtheorem{proposition}[theorem]{Proposition}
\newtheorem{corollary}[theorem]{Corollary}
\theoremstyle{remark}
\newtheorem{remark}[theorem]{Remark}
\numberwithin{equation}{section}
\newcommand{\arxiv}[1]{\href{http://arxiv.org/abs/#1}{\tt arXiv:\nolinkurl{#1}}}
\newcommand{\arXiv}[1]{\href{http://arxiv.org/abs/#1}{\tt arXiv:\nolinkurl{#1}}}
\definecolor{myred}{rgb}{0.75,0,0}
\definecolor{mygreen}{rgb}{0,0.5,0}
\definecolor{myblue}{rgb}{0,0,0.65}
\newcommand{\nc}{\newcommand}
\nc{\flags}{\mathcal{F}}
\def\ii{{\bf i}}
\def\N{\mathbb{N}}
\def\Q{\mathbb{Q}}
\def\C{\mathbb{C}}
\def\R{\mathbb{R}}
\def\Z{\mathbb{Z}}
\def\F{\mathcal{F}}
\def\L{\mathcal{L}}
\def\O{\mathcal{O}}
\def\jj{{\bf j}}
\def\P{\mathcal{P}} 
\def\vv{\mathbf v}
\def\vw{\mathbf w}
\def\uk{\underline{k}}
\def\a{\alpha}
\def\b{\beta}
\def\la{\lambda}
\def\ga{\gamma}
\def\w{\omega}
\def\th{\theta}
\def\inv{^{-1}}
\DeclareMathOperator{\KP}{KP}
\def\hd{\operatorname{hd}}
\def\f{\mathbf{f}}
\def\G{\mathcal{G}}
\def\D{\Delta}
\nc{\ic}{\mathbf{IC}}
\def\s{\sigma}
\def\op{{\mathrm{op}}}\def\pt{{\mathrm{pt}}}
\DeclareMathOperator{\Hom}{Hom}
\DeclareMathOperator{\Ext}{Ext}
\def\homb{\Hom^\bullet}
\def\Ind{\operatorname{Ind}}
\def\End{\operatorname{End}}
\def\Res{\operatorname{Res}}
\def\supp{\operatorname{supp}}
\def\mods{\mbox{-mod}}
\def\prmod{\mbox{-pmod}}
\newcommand{\map}[2]{\,{:}\,#1\!\longrightarrow\!#2}
\newcommand{\spann}{\operatorname{span}}
\newcommand{\mylabel}[2]{#2\def\@currentlabel{#2}\label{#1}}
\def\kpf{\operatorname{kpf}} 
\def\inv{^{-1}}
\def\T{\mathcal{T}}
\def\g{\mathfrak{g}}
\numberwithin{equation}{section}
\title{Representation Theory of Geometric Extension Algebras}
\author[Peter J McNamara]{Peter J McNamara}
\address{School of Mathematics and Statistics,
The University of Melbourne, VIC, Australia.}
\email{maths@petermc.net}
\date{\today}
\begin{document}

\begin{abstract}
We study the question of when geometric extension algebras are polynomial quasihereditary. Our main theorem is that under certain assumptions, a geometric extension algebra is polynomial quasihereditary if and only if it arises from what we call an even resolution. We give an application to the construction of reflection functors for quiver Hecke algebras.
\end{abstract}

\maketitle

\tableofcontents

\section{Introduction}

This paper is about the study of algebras which appear geometrically in the form
\[
A(X,f):= \Hom_{D^b_G(X)}^\bullet (f_*\uk_Y,f_*\uk_Y)
\]
where $f\map{Y}{X}$ is a proper $G$-equivariant morphism of complex algebraic varieties with $Y$ smooth, and $k$ is the coefficient ring. Such algebras have a history of appearing in Lie-theoretic representation theory, dating back to \cite{chrissginzburg,lusztig}. 
For our applications, the most important examples are the quiver Hecke algebras (also known as KLR algebras) of finite ADE type.

This paper is heavily influenced by the ideas of Kato in his papers \cite{katoext,katopbw}, where such algebras were studied in the case where $k=\overline{\Q}_l$.
Kato proved that certain such extension algebras are polynomial quasihereditary, and studied some consequences of this fact.

We work over a general ground field in a deliberate attempt for this work to be relevant for modular representation theory. This requires us to replace the role of purity arguments from Kato's work with arguments based on the notion of evenness.

Our main theorems are Theorems \ref{thm:fwd} and \ref{reverse}. Theorem \ref{thm:fwd} states that under conditions $(\spadesuit)_1$, $(\spadesuit)_2$ and $(\heartsuit)$ (to be introduced in \S\ref{sec3}), a geometric extension algebra for an even morphism $f$ is polynomial quasihereditary. Moreover, there is a converse, which is Theorem \ref{reverse}: If $A(X,f)$ is concentrated in even degrees and is polynomial quasihereditary, then $f$ is even.

After proving our main theorem, we turn our attention to an application to the theory of quiver Hecke algebras. The fact that finite type quiver Hecke algebras are polynomial quasihereditary is essentially due to \cite{bkm}, we give a complete proof in Theorem \ref{thm:klrhw}. As a corollary, we deduce the fact that the corresponding morphism $f$ is even. A geometric proof of this result has subsequently been discovered by Maksimau \cite{rm19}. This allows us access to the machinery of parity sheaves \cite{parity} in studying the representation theory of these quiver Hecke algebras. We utilise this machinery to generalise Kato's construction of reflection functors categorifying the braid group action on $U_q(\g)$.

%

We thank K. Coulembier, P. Shan, T. Braden, C. Mautner and S. Makisumi for useful conversations, and some anonymous referees for their comments.

\section{Polynomial quasihereditary algebras}

In this section, we collect various facts about polynomial quasihereditary algebras which we will need. The main reference is \cite{kleshchev}.

A $\Z$-graded algebra $A=\oplus_{n\in\Z} A_n$ is \emph{Laurentian} if $A_n$ is finite dimensional for all $n$, and there exists $N\in\Z$ such that $A_n=0$ for $n<N$.
When considering modules over a graded algebra, the assumption which we will always make in this paper is that all modules we consider are always graded modules. We use multiplication by $q$ to indicate the grading shift.	

A \emph{polynomial ring} is a commutative graded ring $k[x_1,x_2,\ldots,x_n]$, where each variable $x_i$ is homogeneous of positive degree. A standard example is $H^*_{GL_n(\C)}(\pt)$.

Let $A$ be a Noetherian Laurentian graded unital algebra. Then $A$ has a finite number of simple modules up to isomorphism and grading shift, all of which are finite  dimensional.
Let $\Pi$ be an indexing set for this set of simples. For each $\pi\in\Pi$, let $L(\pi)$ be a choice of the corresponding simple and let $P(\pi)$ be its projective cover.

\begin{definition}
 A two sided ideal $J\subset A$ is called polynomial heredity if it satisfies the following conditions:
\begin{enumerate}
\item[{\tt (SI1)}] $\Hom_A(J,A/J)=0$;
\item[{\tt (SI2)}] As a left module,  
$J\cong P(\pi)^{\oplus m}$ for some $\pi\in\Pi$ and $m\in \N[q,q\inv]$, and setting $B(\pi):=\End_A(P(\pi))^{\op}$, we have $B(\pi)$ is a graded polynomial ring.
\item[{\tt (PSI)}] As a right $B(\pi)$-module, $P(\pi)$ is finitely generated and free.
\end{enumerate}
\end{definition}

Since $B(\pi)$ is assumed to be a polynomial ring, condition {\tt (PSI)} is equivalent to $P(\pi)$ being finitely generated and flat as a right $B(\pi)$-module. It is this latter condition which appears in \cite[Definition 6.1]{kleshchev}.

\begin{definition}
 The algebra $A$ is called polynomial quasihereditary if there exists a finite chain of two-sided ideals
 $
 A=J_0\supsetneq J_1\supsetneq\cdots\supsetneq J_n=(0)
 $
 such that $J_i/J_{i+1}$ is a polynomial heredity ideal in $A/J_{i+1}$ for all $0\leq i<n$.
\end{definition}

Let $\prec$ be a partial order on $\Pi$. For each $\pi\in\Pi$, let $\Sigma_\pi$ be the full subcategory of $A$-modules whose Jordan-Holder constituents are all of the form $q^nL(\sigma)$ where $\sigma\succeq \pi$.
For each $\pi\in\Pi$, let $\Delta(\pi)$ be the projective cover of $L(\pi)$ in $\Sigma_\pi$.

\begin{definition}
 The category $A\mods$ is polynomial highest weight if the following conditions are satisfied for each $\pi\in \Pi$:
 \begin{enumerate}
  \item[\mylabel{sc1}{\tt (SC1)}] $P(\pi)$ has a filtration $0=P_0\subset P_1\subset\cdots\subset P_n=P(\pi)$ with $P_n/P_{n-1}\cong \D(\pi)$ and for $1\leq i< n$, $P_i/P_{i-1}\cong q^N\Delta(\sigma)$ for some $N\in\Z$ and $\sigma\prec\pi$.
 \item[\mylabel{sc2}{\tt (SC2)}] $B(\pi):=\End_A(\D(\pi))^{\rm{op}}$ is a polynomial algebra.
 \item[\mylabel{hwc}{\tt (HWC)}] $\D(\pi)$ is a finitely generated free right $B(\pi)$-module.
 \end{enumerate}
\end{definition}

In \cite{kleshchev}, one will see in place of \ref{hwc} the condition that $\Hom(P(\sigma),\D(\pi))$ is a free right $B(\pi)$-module of finite rank. This follows from \ref{hwc} since $\Hom(P(\sigma),\D(\pi))$ is a direct summand of $\Hom(A,\D(\pi))\cong\D(\pi)$. Therefore under \ref{hwc}\!, $\Hom(P(\sigma),\D(\pi))$ is a summand of a finite free $B(\pi)$-module, hence is finite and free since $B(\pi)$ is a polynomial algebra.

Kleshchev proves the following result, which in the finite dimensional setting is the classical result of Cline, Parshall and Scott \cite[Theorem 3.6]{cps}.

\begin{theorem}\cite[Theorem 6.7]{kleshchev}\label{affinecps}
 The algebra $A$ is polynomial quasihereditary if and only if the category $A\mods$ is polynomial highest weight.
\end{theorem}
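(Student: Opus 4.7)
The plan is to adapt the classical Cline-Parshall-Scott argument to the polynomial-graded setting, proceeding by induction on the length of the heredity chain for the forward direction, and by induction on the size of $\Pi$ for the reverse direction.

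For the forward direction, I begin with a polynomial heredity chain $A = J_0 \supsetneq J_1 \supsetneq \cdots \supsetneq J_n = 0$ and record for each $i$ the index $\pi_i \in \Pi$ supplied by (SI2). Define $\prec$ on $\Pi$ by declaring $\pi_j \prec \pi_i$ when $j > i$. Working modulo $J_{i+1}$ and proceeding by descending induction on $i$, one checks that $\Sigma_{\pi_i}$ (inside $(A/J_{i+1})\mods$) is precisely the full subcategory whose objects have composition factors only among the $L(\pi_k)$ with $k \le i$, so the projective cover of $L(\pi_i)$ in $\Sigma_{\pi_i}$ is a summand of the polynomial heredity ideal $J_i/J_{i+1}$ and hence is identified with $\Delta(\pi_i)$. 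The filtration required by \ref{sc1} for $P(\pi)$ is then produced by intersecting with the heredity chain, and the polynomial conditions \ref{sc2} and \ref{hwc} are direct transcriptions of (SI2) and (PSI).

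For the reverse direction, I pick a maximal element $\pi \in \Pi$ under $\prec$. Maximality forces $\Sigma_\pi$ to contain only $L(\pi)$ among simples up to grading shift, and \ref{sc1} forces the filtration of $P(\pi)$ to have length one, yielding $P(\pi) = \Delta(\pi)$. Let $J$ be the trace ideal of $\Delta(\pi)$ in $A$, i.e.\ the sum of images of all $A$-module maps $\Delta(\pi) \to A$. Since $\Delta(\pi)$ is projective and finitely generated free over $B(\pi) = \End_A(\Delta(\pi))^\op$, picking a free basis realizes $J \cong \Delta(\pi)^{\oplus m}$, giving (SI2) and (PSI). Condition (SI1) follows because any nonzero map $\Delta(\pi) \to A/J$ would place $L(\pi)$ in the head of $A/J$, contradicting the definition of $J$ as the full $\Delta(\pi)$-trace.

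To close the induction, one must verify that $A/J$ is itself polynomial highest weight on the smaller poset $\Pi \setminus \{\pi\}$, with standard modules given by the images of the original $\Delta(\sigma)$ for $\sigma \neq \pi$; the inductive hypothesis then produces a polynomial heredity chain of $A/J$ which lifts to $A$ by prepending $J$. This last verification is the main obstacle: one needs $J \cdot \Delta(\sigma) = 0$ (so that $\Delta(\sigma)$ descends to $A/J$), and the filtration \ref{sc1} of $P(\sigma)$ must project to a valid filtration of $P_{A/J}(\sigma)$ with $B(\sigma)$-freeness preserved. Both of these follow from the $\Ext^1$-vanishing $\Ext^1(\Delta(\sigma), \Delta(\tau)) = 0$ whenever $\tau \not\succ \sigma$, which is a standard consequence of the $\Delta$-filtration hypothesis \ref{sc1} combined with the projectivity of $\Delta(\pi)$ for maximal $\pi$; the bookkeeping of the polynomial structure on $B(\sigma)$ under the quotient is what requires the most care.
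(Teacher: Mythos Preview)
The paper does not supply a proof of this theorem: it is stated with the citation \cite[Theorem 6.7]{kleshchev} and the argument is entirely deferred to Kleshchev. So there is nothing in the present paper to compare your sketch against. Your outline follows the standard CPS-style induction that Kleshchev also employs, so in overall shape you are on the right track.

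That said, there is a genuine error in your reverse direction: you have chosen the wrong extremal element. With the paper's conventions, $\Sigma_\pi$ consists of modules whose composition factors are $L(\sigma)$ with $\sigma \succeq \pi$, and \ref{sc1} says the subquotients of the $\Delta$-filtration of $P(\pi)$ other than the top are $\Delta(\sigma)$ with $\sigma \prec \pi$. For this filtration to collapse to length one, giving $P(\pi)=\Delta(\pi)$, one needs there to be no $\sigma \prec \pi$, i.e.\ $\pi$ must be \emph{minimal}, not maximal. Maximality of $\pi$ makes $\Sigma_\pi$ small but places no constraint on the length of the filtration in \ref{sc1}. Correspondingly, your trace-ideal construction only yields a polynomial heredity ideal when $\Delta(\pi)$ is projective as an $A$-module, and that again requires $\pi$ minimal. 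This is consistent with how the paper itself uses the theorem: in the proof of Theorem~\ref{reverse} the heredity ideal $J$ is attached to a \emph{closed} orbit $Z$, which is minimal in the closure order, and it is exactly there that $P_Z=\Delta_Z$. Once you replace ``maximal'' by ``minimal'' throughout the reverse direction, the remainder of your outline is the correct strategy, with (as you note) the verification that $A/J$ inherits a polynomial highest weight structure on $\Pi\setminus\{\pi\}$ carrying most of the work.
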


\section{Geometric setup}\label{sec3}

Let $X$ be a complex algebraic variety with an action of an algebraic group $G$. For each $x\in X$, let $G_x$ be the stabiliser of $x$. Let $k$ be a field. We assume the following conditions:

\begin{itemize}
 \item[$(\spadesuit)_1$] $G$ acts on $X$ with finitely many orbits.
 \item[$(\spadesuit)_2$] For all $x\in X$, the stabiliser $G_x$ is connected and $H^i_{G_x}({\rm{pt}};k)$ vanishes for odd $i$.
\end{itemize}

 Condition $(\spadesuit)_2$ always holds when $G_x$ is a connected affine algebraic group whose quotient by its unipotent radical is a product of groups $GL_n(\mathbb{C})$.
In general, if $G_x$ is a connected affine algebraic group, $(\spadesuit)_2$ holds whenever $\operatorname{char}(k)$ is outside a small set of known primes, called torsion primes for $G_x$. Condition $(\spadesuit)_2$ implies that $H^\bullet_{G_x}(\rm{pt};k)$ is a polynomial algebra. See \cite[Theorem 2.44]{parity} and the references quoted there.

We spend most of our time working inside the $G$-equivariant derived category $D_G(X;k)$ of constructible $k$-sheaves on $X$, whose definition and basic properties can be found in \cite{bernsteinlunts}. For any objects $\F$ and $\G$, we write
\[
 \Hom^\bullet(\F,\G):=\bigoplus_{d\in\Z} \Hom_{D_G(X;k)}(\F,\G[d])
\]
which is considered as a graded vector space in the obvious way.

Let $f\map{Y}{X}$ be a proper $G$-equivariant morphism, where $Y$ is a smooth complex algebraic variety. Let $\L=f_*\uk_Y\in D^b_G(X)$. We will study the representation theory of the graded algebra
\[
  A(X,f):= \Hom^\bullet(\L,\L).
\]
We only consider $A(X,f)$ as an associative algebra in this paper, we do not care whether it arises from a formal $A_\infty$-algebra. If $k$ has characteristic zero, these algebras will often be formal at least in cases of representation-theoretic interest (for an example, see \cite[Lemma 4.7]{webster}). 

\begin{lemma}
 The algebra $A(X,f)$ is Noetherian and Laurentian.
\end{lemma}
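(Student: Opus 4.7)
The plan is to present $A(X,f)$ as the equivariant hypercohomology $H^\bullet_G(X; R\mathcal{H}om(\L,\L))$ and attack both properties at once using the orbit stratification. By $(\spadesuit)_1$ the stratification is finite, and since $\L\in D^b_G(X)$ is bounded constructible, so is $R\mathcal{H}om(\L,\L)$. Iterated use of the standard open-closed recollement — or equivalently the associated stratification spectral sequence — reduces every question about $H^\bullet_G(X; R\mathcal{H}om(\L,\L))$ to statements about the equivariant cohomology of the restrictions to individual orbits.

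On a single orbit $O = G/G_x$, equivariant descent gives $H^p_G(O; \mathcal{H}^q R\mathcal{H}om(\L,\L)|_O) \cong H^p_{G_x}(\pt; V_x^q)$, where $V_x^q$ is the finite-dimensional stalk at a point $x\in O$ (finite-dimensional because $\L|_O$, by proper base change, has stalks computed from the cohomology of the compact fiber $f^{-1}(x)$). Condition $(\spadesuit)_2$ then guarantees that $H^\bullet_{G_x}(\pt;k)$ is concentrated in even non-negative degrees, is Noetherian, and — via the Serre spectral sequence of $V \to EG_x \times^{G_x} V \to BG_x$ — that $H^p_{G_x}(\pt; V)$ is finite-dimensional in each $p$ and vanishes for $p < 0$, for any finite-dimensional $G_x$-representation $V$. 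Combining the boundedness of $q$ with the finiteness of orbits shows every $E_\infty^n$ is a finite sum of finite-dimensional pieces and is bounded below in $n$, which yields the Laurentian property.

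For Noetherianness, let $R := H^\bullet_G(\pt;k)$ act on $A(X,f)$ centrally via pullback along $X \to \pt$; this action is compatible with the stratification filtration. Each contribution $H^p_{G_x}(\pt; V_x^q)$ is finitely generated over $H^\bullet_{G_x}(\pt;k)$, which is in turn finitely generated over $R$ by $(\spadesuit)_2$. Finiteness of orbits and of the range of $q$ then makes the $E_2$-page of the stratification spectral sequence finitely generated over $R$, and this property passes through the differentials and the abutment filtration to show that $A(X,f)$ is finitely generated over the central Noetherian subring $R$; standard noncommutative algebra then yields that $A(X,f)$ is itself Noetherian as a two-sided algebra. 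The main subtle point in the whole argument is the verification of the input on each orbit — finite-dimensionality and finite generation of $H^p_{G_x}(\pt;V)$ for finite-dimensional $V$ — which is precisely what $(\spadesuit)_2$ is designed to ensure.
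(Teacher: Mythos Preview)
Your argument is essentially correct, but it takes a substantially longer route than the paper and leans on hypotheses the paper does not need.  The paper's proof is a two-liner: identify $A(X,f)$ with the $G$-equivariant Borel--Moore homology of the fibre product $Z=Y\times_X Y$, then quote the general fact that the equivariant Borel--Moore homology of a complex algebraic variety is a finitely generated module over $H^\bullet_G(\pt)$.  Since $H^\bullet_G(\pt)$ is Noetherian and concentrated in non-negative degrees, both Laurentian and Noetherian follow immediately.  No orbit stratification, no spectral sequence, and in particular no use of $(\spadesuit)_1$ or $(\spadesuit)_2$.

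Your stratification spectral sequence reaches the same endpoint (finite generation over $H^\bullet_G(\pt)$), but at the cost of assuming $(\spadesuit)_1$ to make the filtration finite.  One correction: you attribute to $(\spadesuit)_2$ both the Noetherianness of $H^\bullet_{G_x}(\pt;k)$ and its finite generation over $H^\bullet_G(\pt;k)$, but $(\spadesuit)_2$ says only that $G_x$ is connected and that the odd equivariant cohomology vanishes.  The finiteness statements you need are general facts about equivariant cohomology of algebraic groups and their homogeneous spaces, not consequences of the parity condition; you could drop every appeal to $(\spadesuit)_2$ from your argument and it would still go through.  The paper's fibre-product approach is cleaner and strictly more general; your approach has the minor advantage of making the filtration on $A(X,f)$ by orbit closures visible from the start.
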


\begin{proof}
 Let $Z=Y\times_X Y$. Then $A(X,f)$ is isomorphic to the $G$-equivariant Borel-Moore homology of $Z$. Since $Z$ is an algebraic variety, $A(X,f)$ is therefore finitely generated over $H^\bullet_G(\pt)$, hence is Laurentian and Noetherian.
\end{proof}

We make one more assumption.

\begin{itemize}
 \item[$(\heartsuit)$] Taking the closure of the support induces a bijection between indecomposable direct summands of $f_*\uk_Y$ up to shift and the set of $G$-orbits on $X$. The restriction of each indecomposable summand to the corresponding orbit is the constant sheaf on the orbit.
\end{itemize}

We will check this strange looking condition for the quiver Hecke algebras in Theorem \ref{thm:checkheartsuit}.

Let $\Pi$ be a set indexing the orbits. We write $\O_\pi$ for the orbit corresponding to $\pi\in \Pi$ and $\P_\pi$ for the corresponding indecomposable summand of $\L$. We may normalise the homological shift so that the restriction of $\P_\pi$ to $\O_\pi$ is a sheaf.
Conversely, given an orbit $Z$, we write $\pi_Z$ for the corresponding element of $\Pi$.
Furthermore, define
\[
 P(\pi)=\Hom^\bullet (\L,\P_\pi)\quad \mbox{and} \quad L(\pi)=\hd P(\pi).
\]
\begin{theorem}
 The modules $L(\pi)$ comprise a complete set of irreducible modules for $A(X,f)$, up to grading shift. The projective cover of $L(\pi)$ is $P(\pi)$.
\end{theorem}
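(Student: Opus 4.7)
My plan is to transport the decomposition of $\L$ into indecomposable summands (guaranteed by $(\heartsuit)$) across the functor $F := \Hom^\bullet(\L,-)$ to obtain a decomposition of the regular $A$-module into indecomposable projectives, and then invoke the standard bijection between simples and indecomposable projectives for Noetherian Laurentian graded algebras.

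First, by $(\heartsuit)$ we have $\L \cong \bigoplus_{\pi\in\Pi} \P_\pi^{\oplus m_\pi}$ in $D^b_G(X;k)$ for some finite multiplicities $m_\pi\geq 1$. Applying $F$ yields $A = F(\L) \cong \bigoplus_\pi P(\pi)^{\oplus m_\pi}$ as a graded $A$-module, so each $P(\pi)$ is a finitely generated projective $A$-module, and every indecomposable projective summand of $A$ is isomorphic to some $P(\pi)$.

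Next I would verify the standard Yoneda statement that $F$ is fully faithful on the additive subcategory of direct summands of $\L$; in particular $\End_A(P(\pi)) \cong \End(\P_\pi)$. Since $A$ is Laurentian these Hom-spaces are finite dimensional, so by Krull--Schmidt $\End(\P_\pi)$ is local, whence $P(\pi)$ is indecomposable as an $A$-module. The standard theory of Noetherian Laurentian graded algebras then gives that $L(\pi) := \hd P(\pi)$ is simple and that $P(\pi)$ is its projective cover. Irredundancy of $\{L(\pi)\}$ follows from uniqueness of projective covers: any isomorphism $L(\pi) \cong L(\sigma)[d]$ forces $P(\pi) \cong P(\sigma)[d]$, and full faithfulness then gives $\P_\pi \cong \P_\sigma[d]$; comparing support closures via $(\heartsuit)$ forces $\pi=\sigma$, and restricting both sides to $\O_\pi$ (where both equal $\uk_{\O_\pi}$) then forces $d=0$.

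For completeness, any simple $L$ is generated by a nonzero homogeneous element, so is a quotient of some grading shift $A[-e]$ of the regular module; by the first step this shift decomposes as $\bigoplus_\pi P(\pi)[-e]^{\oplus m_\pi}$, so the projective cover of $L$ is a grading shift of some $P(\pi)$, yielding $L \cong L(\pi)[-e]$. The principal technical ingredient throughout is the Krull--Schmidt / local-endomorphism-ring step: the Laurentian property of $A$ supplies the finite-dimensional endomorphism rings needed to run the standard local-ring argument for indecomposable objects in $D^b_G(X;k)$, and one must of course track left/right module conventions carefully against the composition law on $A = \End^\bullet(\L)$.
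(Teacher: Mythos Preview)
Your approach is essentially the same as the paper's: both use the functor $\Hom^\bullet(\L,-)$ to transport the Krull--Schmidt decomposition of $\L$ in $D^b_G(X;k)$ to a decomposition of the regular module into indecomposable projectives, and then invoke the standard bijection between indecomposable projectives and simples for a Noetherian Laurentian graded algebra. The paper phrases the key step as an equivalence between the additive envelope of $\L$ and the category of projective $A(X,f)$-modules; you unpack this into full faithfulness of $F$ on summands plus explicit irredundancy and completeness arguments. All of those elaborations are correct.

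There is one technical slip worth fixing. You assert that ``since $A$ is Laurentian these Hom-spaces are finite dimensional,'' and from this conclude that $\End(\P_\pi)$ is local. But $\End_A(P(\pi))\cong\Hom^\bullet(\P_\pi,\P_\pi)$ is typically \emph{not} finite dimensional---indeed, later in the paper it is identified with a graded polynomial ring $B(\pi)$. What the Laurentian hypothesis actually buys you is that each graded piece is finite dimensional and the grading is bounded below; this makes $\End_A(P(\pi))$ \emph{graded local} (its degree-zero part is finite dimensional, hence local), which is exactly what the graded Krull--Schmidt theorem requires. The paper avoids this detour by asserting directly that $D^b_G(X;k)$ is Karoubian and Krull--Schmidt, which follows from the finite-dimensionality of the degree-zero Hom spaces in the equivariant derived category. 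Either route is fine once the statement is corrected.
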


\begin{proof}
 The category $D^b_G(X;k)$ is Karoubian and Krull-Schmidt. 
 Therefore 
 the additive envelope of $\L$ is equivalent to the category of projective $A(X,f)$-modules. The additive envelope of $\L$ is the smallest full subcategory containing $\L$ that is closed under shifts, direct sums and direct summands.
 This equivalence is given by $\Hom^\bullet(\L,-)$, so we see that the $P(\pi)$ are the indecomposable projectives, proving the theorem.
\end{proof}

%

Let $\F\in D^b_G(X;k)$. $\F$ is said to be $*$-even if $H^j(i_x^*\F)=0$ for all odd $j$ and all inclusions $i_x\map{\{x\}}{X}$ of a point into $X$. $\F$ is said to be $!$-even if $H^j(i_x^!\F)=0$ for all odd $j$ and all inclusions $i_x\map{\{x\}}{X}$ of a point into $X$. $\F$ is said to be even if it is both $*$-even and !-even.
For $?\in \{*,!,\emptyset\}$, $\F$ is $?$-parity if $\F\cong \F'\oplus \F''[1]$ with $\F'$ and $\F''$ $?$-even (this corresponds to the zero pariversity function in the terminology of \cite{parity}).

For each $x\in X$, let $Y_x$ be the fibre $f\inv(x)$. We require the following notions of evenness, which are primarily useful for proper morphisms.

\begin{definition}
 The morphism $f$ is even if $H_i(Y_x;k)=0$ for all $x\in X$ and odd integers $i$.
\end{definition}

A $\Z$-graded algebra $A=\oplus_{d\in\Z}A_d$ is said to be even if $A_d=0$ for all odd $d$.

\begin{lemma}\label{evenimplieseven}
 If $f$ is even, then $A(X,f)$ is even.
\end{lemma}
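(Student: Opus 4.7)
The plan is to use the identification from the proof of the preceding lemma, which via adjunction and proper base change gives $A(X,f) \cong H^{BM,G}_*(Z;k)$ up to a parity-preserving grading shift, where $Z := Y\times_X Y$. It therefore suffices to show that $H^{BM,G}_*(Z;k)$ is concentrated in even degrees.

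I would first stratify $Z$ using the $G$-orbits on $X$: for each orbit $\O \subset X$, set $Z_\O := f^{-1}(\O)\times_\O f^{-1}(\O)$. By $(\spadesuit)_1$ there are finitely many such strata, and iterated use of the long exact sequence in equivariant Borel--Moore homology for a closed stratum and its open complement reduces the claim to showing that each $H^{BM,G}_*(Z_\O;k)$ is concentrated in even degrees.

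For a chosen basepoint $x\in\O$ we have $f^{-1}(\O)\cong G\times^{G_x} Y_x$, so $Z_\O \cong G\times^{G_x}(Y_x\times Y_x)$ and hence $H^{BM,G}_*(Z_\O;k)\cong H^{BM,G_x}_*(Y_x\times Y_x;k)$. The latter I would analyse through the equivariant spectral sequence
\[
E_2^{p,q} = H^p_{G_x}(\pt;k)\otimes H^{BM}_q(Y_x\times Y_x;k) \Longrightarrow H^{BM,G_x}_{q-p}(Y_x\times Y_x;k).
\]
By $(\spadesuit)_2$ the factor $H^p_{G_x}(\pt;k)$ vanishes for odd $p$. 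Since $f$ is proper, $Y_x$ is compact, so $H^{BM}_*(Y_x;k)=H_*(Y_x;k)$, which is concentrated in even degrees by the evenness hypothesis on $f$; the K\"unneth formula (valid over the field $k$) then forces $H^{BM}_q(Y_x\times Y_x;k)=0$ for odd $q$. Hence $E_2^{p,q}$ is supported where both $p$ and $q$ are even, and the abutment is concentrated in even degrees, as required.

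The main obstacle is not conceptual but bookkeeping: one must check that the grading shift relating $A(X,f)_n$ to $H^{BM,G}_*(Z;k)$ is always by an even integer (on the summand coming from components $Y_i,Y_j$ of $Y$ the shift is $2\dim Y_j$, which is even), and that the orbit-stratification long exact sequence and equivariant spectral sequence interact correctly with the even/odd dichotomy. The substantive input is simply $(\spadesuit)_2$ together with the fibrewise evenness of $f$, which are exactly the two even ingredients being combined.
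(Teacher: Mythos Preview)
Your argument is correct, but it takes a different route from the paper. The paper's proof is a two-line citation of the parity-sheaf machinery of Juteau--Mautner--Williamson: first \cite[Proposition~2.34]{parity} shows that $\L=f_*\uk_Y$ is an even object of $D^b_G(X;k)$ (this uses exactly the evenness of the fibres together with $(\spadesuit)_2$), and then \cite[Proposition~2.6]{parity} shows that $\Hom^\bullet$ between even objects is concentrated in even degrees. Your argument unwinds the same ingredients by hand on the Borel--Moore side: you stratify $Z=Y\times_X Y$ by orbits, reduce to the stabiliser via the induction equivalence, and then combine $(\spadesuit)_2$ with K\"unneth for the compact fibre $Y_x\times Y_x$ in a Leray--Serre spectral sequence. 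This is essentially the content of those two cited propositions specialised to the situation at hand.

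What each approach buys: the paper's version is shorter and, more importantly, establishes the intermediate fact that $\L$ itself is even as an object of $D^b_G(X;k)$. This is not a throwaway step---it is precisely the standing hypothesis used throughout Section~4 (see the sentence ``We assume throughout this section that $\L$ is even'' and its use in Lemmas~\ref{parityses} and~\ref{standardflag}). Your direct computation on $Z$ gives the evenness of $A(X,f)$ without passing through evenness of $\L$, so if you adopt it you would still need to invoke \cite[Proposition~2.34]{parity} separately before Section~4. On the other hand, your argument is more self-contained and makes the dependence on $(\spadesuit)_1$, $(\spadesuit)_2$ and the fibrewise hypothesis completely explicit.
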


\begin{proof}
 By \cite[Proposition 2.34]{parity}, $\L=f_*\uk_Y$ is even. By \cite[Proposition 2.6]{parity}, this implies that $A(X,f)$ is even.
\end{proof}

\begin{lemma}\label{evensplit}
 Let $\F$ be an even object in $D^b_H(\pt;k)$, where $H$ is a connected algebraic group with $H^*_H(\pt;k)$ concentrated in even degrees. Then $$\F\cong \bigoplus_{d\in 2\Z} \underline{k}[d]^{\oplus m_d}$$ for some integers $m_d$.
\end{lemma}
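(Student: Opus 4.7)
The plan is to peel off shifts of $\underline{k}$ one cohomological degree at a time, splitting each resulting distinguished triangle by parity vanishing. Let $V^\bullet$ denote the underlying complex of vector spaces of $\F$; the evenness of $\F$ amounts to $H^j(V^\bullet)=0$ for all odd $j$, since on a point the $*$- and $!$-stalk conditions coincide. Let $j_0$ be the smallest integer with $H^{j_0}(V^\bullet)\neq 0$; by hypothesis $j_0$ is even.

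The first key step is to lift a basis of $H^{j_0}(V^\bullet)$ to classes in $H^{j_0}_H(\pt;\F)=\Hom(\underline{k}[-j_0],\F)$. For this I will use the spectral sequence
\begin{equation*}
E_2^{p,q}=H^p_H(\pt;k)\otimes H^q(V^\bullet)\Longrightarrow H^{p+q}_H(\pt;\F).
\end{equation*}
The hypothesis on $H^*_H(\pt;k)$ together with the evenness of $\F$ forces $E_2^{p,q}=0$ unless both $p$ and $q$ are even, and a parity comparison of source and target then shows that every differential $d_r$ with $r\geq 2$ vanishes. Combined with the minimality of $j_0$, which kills every entry $E_2^{p,j_0-p}$ with $p\geq 2$, this yields an isomorphism $H^{j_0}_H(\pt;\F)\cong E_\infty^{0,j_0}=H^{j_0}(V^\bullet)$ via the edge map.

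Now choose $m=\dim_k H^{j_0}(V^\bullet)$ classes lifting a basis, assemble them into a map $\phi\map{\underline{k}[-j_0]^{\oplus m}}{\F}$, and form the cone triangle $\underline{k}[-j_0]^{\oplus m}\xrightarrow{\phi}\F\to\F'$. The long exact sequence in underlying cohomology shows $H^j(\F')=0$ for $j\leq j_0$ and $H^j(\F')\cong H^j(V^\bullet)$ for $j>j_0$, so $\F'$ is again even and has strictly smaller cohomological amplitude. The triangle splits because its obstruction class lies in
\begin{equation*}
\Ext^1(\F',\underline{k}[-j_0])=\Hom^\bullet(\F',\underline{k})_{1-j_0},
\end{equation*}
which vanishes since $1-j_0$ is odd while $\Hom^\bullet(\F',\underline{k})$ is concentrated in even degrees by \cite[Proposition 2.6]{parity} applied to the two even objects $\F'$ and $\underline{k}$. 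Hence $\F\cong\underline{k}[-j_0]^{\oplus m}\oplus\F'$, and induction on the cohomological amplitude of $\F$ (finite since $\F\in D^b$) completes the proof, with conservativity of the forgetful functor handling the base case of vanishing underlying cohomology.

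The main obstacle is the spectral sequence step, i.e., verifying that maps $\underline{k}[-j_0]\to\F$ genuinely detect the lowest underlying cohomology. This is where both assumptions on $H$ enter crucially: connectedness places $H^*_H(\pt;k)$ in non-negative degrees starting with $k$ in degree $0$, and its even concentration is what produces the bipartite vanishing pattern on the $E_2$-page and the resulting degeneracy. Once this input is secured, the splittings and the inductive termination are formal.
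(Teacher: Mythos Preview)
Your proof is correct, and in spirit it follows the same strategy as the paper: induct on the cohomological amplitude, produce a triangle separating off one piece, and split the triangle using the odd-degree vanishing of $\Hom$'s between even objects. The difference lies in how the triangle is obtained. The paper simply uses the truncation triangle $\tau_{\leq i}\F\to\F\to\tau_{>i}\F$; after applying the inductive hypothesis to \emph{both} pieces, the connecting morphism is a matrix of maps in $\Hom(\uk,\uk[e])=H^e_H(\pt;k)$ with $e$ odd, hence zero. No spectral sequence and no citation of \cite{parity} are needed.

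Your route instead builds the map $\uk[-j_0]^{\oplus m}\to\F$ by hand via the degeneration and edge map of the hypercohomology spectral sequence. This is valid, but it is doing extra work to reconstruct something you already have for free: since $j_0$ is minimal and $H$ is connected, $\tau_{\leq j_0}\F\cong\uk[-j_0]^{\oplus m}$ in $D^b_H(\pt;k)$, and the canonical map $\tau_{\leq j_0}\F\to\F$ is (up to change of basis) exactly your $\phi$. What your version buys is an explicit identification $H^{j_0}_H(\pt;\F)\cong H^{j_0}(V^\bullet)$ via the forgetful/edge map; what the paper's version buys is brevity and self-containment, since the splitting reduces to $H^{\mathrm{odd}}_H(\pt;k)=0$ directly rather than going through the general parity-Hom vanishing of \cite{parity}.
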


\begin{proof}
 We proceed by induction on the width of the support of $\F$. This support is the set of integers $i$ such that $H^i(\F)\neq 0$, where $H^i$ is the cohomological functor associated with the usual t-structure. If this width is one, then $\F$ lies (up to a shift) in the abelian category of $H$-equivariant sheaves on a point. Since $H$ is connected, every such sheaf is a direct sum of constant sheaves. This completes the proof in this case.
 
 Now assume that the width of $\F$ is greater than one. Consider a triangle of the form
 \[
  \tau_{\leq i}\F\to \F\to \tau_{>i} \F\xrightarrow{+1}
 \]
Choose $i$ such that $\tau_{\leq i}\F$ and $\tau_{>i}\F$ have smaller support than $\F$, so by the inductive hypothesis this triangle
is of the form
\[
 \bigoplus_{{d\in 2\Z, d\leq i}} \uk[d]^{\oplus m_d} \to \F \to  \bigoplus_{{d\in 2\Z, d> i}} \uk[d]^{\oplus m_d} \xrightarrow{+1}.
\]
Since $H_H^*(\pt)$ is concentrated in even degrees, $\Hom(\uk,\uk[e])=0$ for odd $e$. Therefore the morphism $\tau_{>i}\F\to \tau_{\leq i}\F[1]$ is zero. Thus $\F\cong \tau_{\leq i}\F\oplus \tau_{>i}\F$, hence is of the desired form.
\end{proof}

\section{Evenness implies affine highest weight}

The aim of this section is to prove Theorem \ref{thm:fwd}. Throughout, the partial order on $\Pi$ is the closure relation.
We assume throughout this section that $\L$ is even.

Let $\pi\in\Pi$. Write $o_\pi\map{\O_\pi}{X}$ for the inclusion of the corresponding orbit. Define
\[
 \D(\pi)=\Hom^\bullet (\L,(o_\pi)_*\uk_{\O_\pi}).
\]
This is an $A(X,f)$-module. We will show in Proposition \ref{algstd} that this choice of notation is justified, in the sense that this agrees with the algebraically defined module with the same name. Until we make this identification, we reserve the notation $\D(\pi)$ for this geometrically defined module.

%

\begin{lemma}\label{parityses}
 Let $\L$ be parity and $\P$ be !-even on $X$. Let $i\map{Z}{X}$ be the inclusion of a closed subset and $j$ the inclusion of the open complement. Then $i_!i^!\P$ and $j_*j^*\P$ are also !-even, and there is a short exact sequence of $\Hom^\bullet(\L,\L)$-modules
 \[
  0\to \Hom^\bullet(\L,i_!i^!\P)\to \homb(\L,\P)\to \homb(\L,j_*j^*\P)\to 0.
 \]
\end{lemma}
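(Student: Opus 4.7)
The plan is to apply $\Hom^\bullet(\L,-)$ to the standard recollement distinguished triangle
\[
 i_! i^! \P \to \P \to j_* j^* \P \xrightarrow{+1},
\]
and to force the resulting long exact sequence to collapse to the desired short exact sequence via a parity vanishing argument.

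First I would check that $i_! i^! \P$ and $j_* j^* \P$ are both $!$-even. For $i_! i^! \P$: if $x \in Z$ then $i_x^! i_! i^! \P \cong i_x^! \P$, which is in even degrees by hypothesis; for $x$ in the open complement $U$ one has $i_x^! i_! i^! \P = 0$ since $i_! i^! \P$ is supported on $Z$. For $j_* j^* \P$: applying $i^!$ to the recollement triangle and using the standard identity $i^! i_! = \mathrm{id}$ gives $i^! j_* j^* \P = 0$, so $i_x^! j_* j^* \P = 0$ for $x \in Z$; for $x \in U$, factoring $i_x$ through $j$ (an open inclusion, so $j^! = j^*$) yields $i_x^! j_* j^* \P \cong i_x^! \P$, which is $!$-even.

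Next, $\Hom^\bullet(\L,-)$ applied to the recollement triangle produces a long exact sequence in each degree. To show it collapses to a short exact sequence, it is enough to check that the connecting map
\[
 \Hom(\L, j_* j^* \P[d]) \to \Hom(\L, i_! i^! \P[d+1])
\]
vanishes for every $d$. Writing $\L \cong \L' \oplus \L''[1]$ with $\L', \L''$ both even, I would invoke \cite[Proposition 2.6]{parity}: $\Hom^\bullet$ from a $*$-even object to a $!$-even object is concentrated in even degrees. Applied to $\L'$ and the three $!$-even sheaves $i_!i^!\P,\P,j_*j^*\P$ this forces $\Hom^\bullet(\L',\cdot)$ to live in even degrees; applied to $\L''[1]$ they live in odd degrees. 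In either summand of $\L$ the connecting map joins a degree of one parity to a degree of the opposite parity, and therefore vanishes.

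The main substantive ingredient is the parity vanishing of $\Hom^\bullet$ between $*$-even and $!$-even sheaves, namely \cite[Proposition 2.6]{parity}; everything else is formal recollement bookkeeping. This is the step ultimately relying on assumption $(\spadesuit)_2$ via Lemma \ref{evensplit}, and granting it (as the paper already does in Lemma \ref{evenimplieseven}) the proof is essentially immediate.
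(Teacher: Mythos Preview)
Your proof is correct and follows essentially the same approach as the paper: apply $\Hom(\L,-)$ to the recollement triangle $i_!i^!\P\to\P\to j_*j^*\P\xrightarrow{+1}$ and invoke \cite[Proposition~2.6]{parity} so that parity vanishing collapses the long exact sequence. You spell out the $!$-evenness verifications and the decomposition $\L\cong\L'\oplus\L''[1]$ more explicitly than the paper's one-line proof, but the argument is identical (one small aside: the dependence on $(\spadesuit)_2$ enters directly through \cite[Proposition~2.6]{parity}, not via Lemma~\ref{evensplit}).
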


\begin{proof}
 It follows from \cite[Proposition 2.6]{parity} that the long exact sequence obtained by applying $\Hom(\L,-)$ to the triangle $i_!i^!\P\to \P\to j_*j^*\P\xrightarrow{+1}$ breaks up into a series of short exact sequences for parity vanishing reasons.
\end{proof}

\begin{corollary}\cite[Corollary 2.9]{parity}\label{cor:fsurj}
 Let $j\map{U}{X}$ be the inclusion of an open $G$-stable subset and $\L$ be a parity sheaf on $X$. Then the canonical algebra homomorphism
 \[
  \Hom^\bullet(\L,\L)\rightarrow \Hom^\bullet(j^*\L,j^*\L)
 \]
is surjective.
\end{corollary}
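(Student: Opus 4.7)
The plan is to deduce surjectivity directly from Lemma \ref{parityses} applied with $\P = \L$ and $i\map{Z}{X}$ the inclusion of the closed $G$-stable complement $Z := X \setminus U$, composed with the natural adjunction.

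First I would verify the hypotheses of Lemma \ref{parityses}. By assumption in this section $\L$ is even, so in particular it is parity and $!$-even, so both the ``$\L$ parity'' and ``$\P = \L$ is $!$-even'' conditions are met. The lemma then supplies the short exact sequence
\[
0 \to \homb(\L, i_!i^!\L) \to \homb(\L,\L) \to \homb(\L, j_* j^* \L) \to 0,
\]
so the map $\homb(\L,\L) \twoheadrightarrow \homb(\L, j_*j^*\L)$ is surjective.

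Next I would use the $(j^*, j_*)$ adjunction to rewrite the target: there is a natural isomorphism
\[
\homb(\L, j_* j^* \L) \;\cong\; \homb(j^* \L, j^* \L).
\]
A small but essential check is that the composite
\[
\homb(\L,\L) \to \homb(\L, j_*j^*\L) \cong \homb(j^*\L, j^* \L)
\]
is actually the ``restriction'' homomorphism $\varphi \mapsto j^*\varphi$ of the corollary, and not something twisted. This is straightforward: the first map is induced by the unit $\L \to j_*j^*\L$ of adjunction, and unwinding the adjunction isomorphism shows that the composite sends $\varphi\map{\L}{\L[d]}$ to $j^*\varphi\map{j^*\L}{j^*\L[d]}$, as required. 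It is then automatic that this map respects the algebra structure, since pullback along $j^*$ is a functor.

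There is essentially no obstacle here — the work was done in Lemma \ref{parityses}, whose parity-vanishing argument causes the relevant long exact sequence to split into short exact sequences. The only thing to be careful about is identifying the surjection in the SES with the actual restriction homomorphism via the open--closed adjunction; once that identification is made, the corollary follows immediately.
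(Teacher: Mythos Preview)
Your proof is correct and is exactly the paper's approach: apply Lemma \ref{parityses} with $\P=\L$ and then use the $(j^*,j_*)$ adjunction. Your added verification that the resulting surjection coincides with the restriction homomorphism is a welcome elaboration of what the paper leaves implicit.
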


\begin{proof}
 Take $\P=\L$ in Lemma \ref{parityses} and use the $(j^*,j_*)$ adjunction.
\end{proof}

A module $M$ is said to have a \emph{standard flag} if it has a finite flag of submodules $0=M_0\subset M_1\subset \cdots \subset M_n=M$ such that each quotient $M_{i}/M_{i-1}$ is isomorphic to $q^N\D(\pi)$ for some $N\in \Z$ and $\pi\in \Pi$.

\begin{lemma}\label{standardflag}
 Let $\F$ be 
 !-even. Then $\Hom^\bullet (\L,\F)$ has a standard flag.
\end{lemma}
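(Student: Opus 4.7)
The plan is to induct on the number of $G$-orbits meeting $\supp\F$, peeling off one orbit at a time with Lemma \ref{parityses}.

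\emph{Base and inductive step.} If $\supp\F$ is empty there is nothing to prove. Otherwise choose an orbit $\O_\pi$ which is open in $\supp\F$ (equivalently, minimal for the opposite of the closure order on orbits in $\supp\F$), and set $Z := \supp\F \setminus \O_\pi$, a $G$-stable closed subset of $X$. Let $i\map{Z}{X}$ be the closed inclusion and $j\map{U}{X}$ the complementary open inclusion, so that $\O_\pi$ is closed in $U$ and $(j^*\F)$ is supported on $\O_\pi \subset U$. Since $\L$ is even, it is in particular parity, and Lemma \ref{parityses} (applied with $\P=\F$) yields that $i_!i^!\F$ and $j_*j^*\F$ are !-even, together with a short exact sequence of $A(X,f)$-modules
\[
0 \to \homb(\L, i_!i^!\F) \to \homb(\L,\F) \to \homb(\L, j_*j^*\F) \to 0.
\]
The support of $i_!i^!\F$ is contained in $Z$, which meets strictly fewer orbits than $\supp\F$, so by the inductive hypothesis the left-hand term has a standard flag. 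It therefore suffices to show that the right-hand term is a finite direct sum of shifts $q^N\D(\pi)$, which can be refined to a standard flag.

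\emph{Analysis of the top quotient.} Write $i'\map{\O_\pi}{U}$ for the closed inclusion so that $o_\pi = j\circ i'$. Since $j^*\F$ is supported on the closed subset $\O_\pi$ of $U$, we have $j^*\F \cong (i')_*\G$ for some $\G\in D^b_G(\O_\pi)$, and hence $j_*j^*\F \cong (o_\pi)_*\G$. Using that $(i')_* = (i')_!$ for a closed inclusion, one checks that $(i')^*(j^*\F) = \G = (i')^!(j^*\F)$. The !-evenness of $j_*j^*\F$ therefore transfers to !-evenness (equivalently, evenness, since all !- and *-stalks within a single orbit agree with the underlying $G_x$-equivariant object on a point up to the same even shift $-2\dim\O_\pi$) of $\G$ on $\O_\pi$.

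\emph{Splitting on the orbit.} Using the equivalence $D^b_G(\O_\pi) \simeq D^b_{G_x}(\pt)$ for $x\in\O_\pi$, the object $\G$ corresponds to an even object in $D^b_{G_x}(\pt;k)$. By condition $(\spadesuit)_2$, $G_x$ is connected and $H^*_{G_x}(\pt;k)$ is concentrated in even degrees, so Lemma \ref{evensplit} applies and gives
\[
\G \cong \bigoplus_{d\in 2\Z} \underline{k}_{\O_\pi}[d]^{\oplus m_d}
\]
for nonnegative integers $m_d$, almost all zero. Pushing forward by $o_\pi$ and applying $\homb(\L,-)$, the top quotient becomes $\bigoplus_{d\in 2\Z} q^{-d}\D(\pi)^{\oplus m_d}$, which manifestly refines to a finite filtration whose quotients are shifts of $\D(\pi)$. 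Concatenating with the standard flag on the submodule produced by induction completes the proof.

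\emph{Main obstacle.} The only delicate point is verifying evenness of $\G$ on the orbit: one must confirm that the !-evenness of $j_*j^*\F$ (which is what Lemma \ref{parityses} gives) is exactly what is needed for Lemma \ref{evensplit}, which requires evenness (both $*$ and $!$). This is where the identification $(i')^* = (i')^!$ on sheaves supported on the closed orbit $\O_\pi \subset U$ is essential, and where condition $(\spadesuit)_2$ on the stabiliser plays its role.
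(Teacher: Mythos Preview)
Your proof is correct and follows essentially the same approach as the paper's: induct on the number of orbits in the support, peel off an open orbit using Lemma \ref{parityses}, identify the quotient term as a direct sum of shifted standard modules via Lemma \ref{evensplit}, and apply induction to the subobject. Your write-up is in fact more careful than the paper's at the one subtle point---explaining why the restriction to the single orbit is genuinely even (not merely $!$-even), via the even shift between $i_x^!$ and $i_x^*$ on a smooth orbit---which the paper's proof asserts in one clause without justification.
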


\begin{proof}
 Induct on the number of orbits in $\supp(\F)$. Let $Z\subset \supp(\F)$ be closed such that $V:=\supp(F)\setminus Z$ is a single orbit. Let $U=X\setminus Z$ and write $j\map{U}{X}$ and $i\map{Z}{X}$ for the inclusions. 
 Then $i_!i^!\F$ and $j_*j^*\F$ are both 
 !-even. So there is a short exact sequence of $A(X,f)$-modules
 \[
  0 \to \Hom^\bullet (\L,i_!i^!\F)\to \Hom^\bullet (\L,\F)\to \Hom^\bullet(\L,j_*j^*\F)\to 0.
 \]
Let $o\map{V}{X}$ be the inclusion. We have $j_*j^*\F\cong o_*o^*\F$. Since $o^*\F$ is !-even and $V$ is a single orbit, it is even and Lemma \ref{evensplit} implies that $o^*\F$ is a direct sum of even shifts of $\uk_V$. Therefore $\Hom^\bullet(\L,j_*j^*\F)$ is a direct sum of even shifts of the standard module $\D(\pi_V)$. By inductive assumption, $\Hom^\bullet(\L,i_!i^!\F)$ has a standard filtration. Therefore $\Hom^\bullet(\L,\F)$ has a standard filtration. 
\end{proof}

\begin{corollary}\label{proofofsc1}
The condition \ref{sc1} holds.
\end{corollary}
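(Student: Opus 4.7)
The plan is to specialize Lemma \ref{standardflag} to $\F = \P_\pi$. Since $\L$ is assumed even throughout this section, its direct summand $\P_\pi$ is also even, hence in particular $!$-even. Lemma \ref{standardflag} therefore immediately yields a standard flag on $P(\pi) = \Hom^\bullet(\L, \P_\pi)$. The work remaining is to verify that the flag can be arranged in the order required by \ref{sc1}: the top quotient must be $\D(\pi)$ with no grading shift, and all the other quotients must be of the form $q^N \D(\sigma)$ with $\sigma \prec \pi$.

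To achieve this I would inspect the inductive construction in the proof of Lemma \ref{standardflag}, which proceeds by removing one open orbit from the support at each step. For $\F = \P_\pi$, the support is $\overline{\O_\pi}$, whose unique open orbit is $\O_\pi$ itself. Running the first step of the induction with $V = \O_\pi$ (and $Z = \overline{\O_\pi}\setminus\O_\pi$) produces a short exact sequence
\[
 0 \to \Hom^\bullet(\L, i_!i^!\P_\pi) \to P(\pi) \to \Hom^\bullet(\L, (o_\pi)_* o_\pi^*\P_\pi) \to 0.
\]
Condition $(\heartsuit)$ is used here in an essential way: it says that $o_\pi^*\P_\pi$ is exactly $\uk_{\O_\pi}$, with no grading shift. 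Thus the quotient on the right is precisely $\Hom^\bullet(\L, (o_\pi)_* \uk_{\O_\pi}) = \D(\pi)$, matching the condition that $P_n/P_{n-1} \cong \D(\pi)$ without shift.

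The submodule $\Hom^\bullet(\L, i_!i^!\P_\pi)$ is supported on $\overline{\O_\pi}\setminus\O_\pi$, which is a union of orbits $\O_\sigma$ with $\sigma \prec \pi$. Iterating Lemma \ref{standardflag} (or simply applying the lemma as a black box) gives a standard flag on this submodule whose subquotients are all of the form $q^N \D(\sigma)$ with $\sigma \prec \pi$. Concatenating this with the top piece $\D(\pi)$ yields the filtration required by \ref{sc1}.

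There is no substantial obstacle here; the only subtlety is that one must be careful to start the induction with the open orbit $\O_\pi$ (rather than some arbitrary choice) so that the top quotient is $\D(\pi)$, and one must invoke $(\heartsuit)$ to rule out a grading shift on this top piece.
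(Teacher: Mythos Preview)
Your proposal is correct and follows essentially the same approach as the paper: apply Lemma \ref{standardflag} to $\P_\pi$, then examine the inductive construction, using $\supp(\P_\pi)=\overline{\O_\pi}$ and $(\heartsuit)$ (i.e.\ $\P_\pi|_{\O_\pi}\cong\uk_{\O_\pi}$) to identify the top quotient as $\D(\pi)$ without shift and the remaining subquotients as $q^N\D(\sigma)$ with $\sigma\prec\pi$. Your write-up is simply more explicit than the paper's terse version.
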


\begin{proof}
 The projective $P(\pi)$ is $\Hom^\bullet (\L,\P_\pi)$ with $\P_\pi$ parity. Therefore $P(\pi)$ has a standard filtration. Since $\supp(\P_\pi)=\overline{\O_\pi}$ and $\P_\pi|_{\O_\pi}\cong \uk$, an examination of the proof of Lemma \ref{standardflag} shows that all sections of the filtration bar one are of the form $\D(\sigma)$ with $\sigma \prec\pi$, and the remaining section is $\D(\pi)$ and appears as a quotient.
\end{proof}

%
%

\begin{proposition}\label{algstd}
 The module $\D(\pi)$ is an algebraic standard module, in the sense that it is the projective cover of $L(\pi)$ in $\Sigma_\pi$.
\end{proposition}

\begin{proof}
 To achieve this identification, we have to show that
\begin{enumerate}
 \item \label{cond:1} Any simple subquotient of $\D(\pi)$ is of the form $L(\sigma)$ with $\sigma\succeq \pi$.
 \item \label{cond:2} If $\sigma\succeq\pi$, then $\dim\Hom(\D(\pi),L(\sigma))=\delta_{\pi\sigma}$ and $\Ext^1(\D(\pi),L(\sigma))=0$.
\end{enumerate}
 
 For the first fact, suppose that $L(\sigma)$ is a subquotient of $\D(\pi)$. Then there is a nontrivial homomorphism from $P(\sigma)$ to $\Delta(\pi)$, hence the projection from $\L$ to the summand $\P_\sigma$ acts nontrivially on $\D(\pi)$. Therefore
 \begin{equation}\label{asdf}
  \Hom^\bullet(\P_\sigma,(o_\pi)_*\uk)\neq 0.
 \end{equation}
Unless $\sigma\succeq \pi$, there is an open $G$-invariant subset $U$ of $X$ containing $\O_\pi$ such that $\supp(\P_\sigma)\cap U=\emptyset$. Write $j\map{U}{X}$ for the open embedding. Then $j^*\P_\sigma=0$ so
\[
 \Hom^\bullet(\P_\sigma,j_*\F)=\Hom^\bullet(j^*\P_\sigma,\F)=0
\]
for any $\F$. Since $(o_\pi)_*\uk$ is of the form $j_*\F$, this contradicts (\ref{asdf}) above, proving the first part.

Now consider the short exact sequence from Corollary \ref{proofofsc1}
\begin{equation}\label{kpd}
 0\to K(\pi)\to \P(\pi)\to \D(\pi)\to 0
\end{equation}
where $K(\pi)$ has a filtration with subquotients of the form $\D(\tau)$, for $\tau\prec\pi$.

Each $\D(\tau)$ is a quotient of $P(\tau)$, so has an irreducible head, namely $L(\tau)$. Therefore $\Hom(K(\pi),L(\sigma))=0$ unless $\sigma\prec\pi$. Applying $\Hom(-,L(\sigma))$ to the short exact sequence (\ref{kpd}) then yields $\Ext^1(\D(\pi),L(\sigma))=0$ unless $\sigma\prec\pi$, as required.
\end{proof}

\begin{proposition}
 The module $\D(\pi)$ satisfies \ref{sc2} and \ref{hwc}.
\end{proposition}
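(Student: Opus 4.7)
The plan is to identify both the endomorphism algebra $B(\pi) = \End^\bullet_A(\Delta(\pi))^{\op}$ and the module $\Delta(\pi)$ itself, as a right $B(\pi)$-module, in terms of the equivariant cohomology $H^\bullet_{G_x}(\pt;k)$, where $x$ is a chosen point of $\mathcal{O}_\pi$. Both \ref{sc2} and \ref{hwc} will then drop out of $(\spadesuit)_2$ together with evenness of $\mathcal{L}$ and Lemma \ref{evensplit}.

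To compute $B(\pi)$, I would apply $\Hom^\bullet_A(-,\Delta(\pi))$ to the short exact sequence \eqref{kpd}. The module $K(\pi)$ has a filtration with sections $\Delta(\tau)$ for $\tau \prec \pi$ by Corollary \ref{proofofsc1}, and each such $\Delta(\tau)$ has simple head $L(\tau)$. Since Proposition \ref{algstd} forces every simple subquotient of $\Delta(\pi)$ to be $L(\sigma)$ with $\sigma \succeq \pi$, one obtains $\Hom^\bullet_A(\Delta(\tau),\Delta(\pi)) = 0$ for $\tau \prec \pi$, and hence $\Hom^\bullet_A(K(\pi),\Delta(\pi)) = 0$ and $\End^\bullet_A(\Delta(\pi)) \cong \Hom^\bullet_A(P(\pi),\Delta(\pi))$. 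Because $\mathcal{P}_\pi$ lies in the additive envelope of $\mathcal{L}$, the right-hand side is identified with $\Hom^\bullet(\mathcal{P}_\pi,(o_\pi)_*\uk_{\mathcal{O}_\pi})$; applying $(o_\pi^*,(o_\pi)_*)$-adjunction and using $o_\pi^*\mathcal{P}_\pi = \uk_{\mathcal{O}_\pi}$ from $(\heartsuit)$, this collapses to $\End^\bullet_{D^b_G(\mathcal{O}_\pi)}(\uk_{\mathcal{O}_\pi}) \cong H^\bullet_{G_x}(\pt;k)$. As this ring is commutative, $B(\pi) \cong H^\bullet_{G_x}(\pt;k)$, which is a graded polynomial algebra under $(\spadesuit)_2$ together with the usual structural hypothesis on $G_x$ flagged after that condition; this establishes \ref{sc2}.

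For \ref{hwc}, the same adjunction rewrites $\Delta(\pi) = \Hom^\bullet(o_\pi^*\mathcal{L},\uk_{\mathcal{O}_\pi})$. Since $\mathcal{L}$ is even, $o_\pi^*\mathcal{L}$ is $*$-even, and under the standard equivalence $D^b_G(\mathcal{O}_\pi) \cong D^b_{G_x}(\pt)$ this becomes simply an even object on a point, so Lemma \ref{evensplit} applies to give $o_\pi^*\mathcal{L} \cong \bigoplus_{d \in 2\Z} \uk_{\mathcal{O}_\pi}[d]^{\oplus m_d}$ with only finitely many nonzero $m_d$. Applying $\Hom^\bullet(-,\uk_{\mathcal{O}_\pi})$ then yields $\Delta(\pi) \cong \bigoplus_d H^\bullet_{G_x}(\pt;k)[-d]^{\oplus m_d}$, manifestly a finitely generated free right module over $H^\bullet_{G_x}(\pt;k) \cong B(\pi)$. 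One has to check that the $B(\pi)$-action appearing on this geometric side via postcomposition with $\End^\bullet(\uk_{\mathcal{O}_\pi})$ matches the one coming from $\End^\bullet_A(\Delta(\pi))^{\op}$ under the identifications above; this amounts to a naturality statement for the adjunction.

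The point requiring the most attention will be precisely this compatibility of $(o_\pi^*,(o_\pi)_*)$-adjunction with the $B(\pi)$-module structure, so that the free-module decomposition obtained geometrically is genuinely a decomposition as a $B(\pi)$-module. A secondary subtlety is the passage from $(\spadesuit)_2$, which supplies only even-degree concentration, to genuine polynomiality of $H^\bullet_{G_x}(\pt;k)$, which relies on auxiliary structural assumptions on $G_x$ of the sort flagged in \S\ref{sec3}. Neither obstacle is deep, but each warrants careful bookkeeping.
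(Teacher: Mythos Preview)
Your argument is correct, and it takes a genuinely different route from the paper's. The paper first restricts to an open $G$-stable $U$ in which $\O_\pi$ is \emph{closed}, so that $\P_\pi\cong i_*\uk_Z$ and hence $P(\pi)=\Delta(\pi)$ there; it then reads off $\End(P(\pi))\cong H^*_G(Z)$ directly, and for \ref{hwc} it invokes the Leray spectral sequence $H^*_G(Z;H_*(f^{-1}(z)))\Rightarrow H^G_*(Y_Z)$, using evenness of $f$ to force degeneration at $E_2$ and hence freeness of $P(\pi)\cong H^G_*(Y_Z)$ over $H^*_G(Z)$. You instead stay on $X$, compute $\End_A(\Delta(\pi))$ algebraically via the sequence \eqref{kpd} and the adjunction $(o_\pi^*,(o_\pi)_*)$, and for \ref{hwc} you apply Lemma~\ref{evensplit} to $o_\pi^*\L$ to split $\Delta(\pi)$ directly as a free $H^*_{G_x}(\pt)$-module. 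Your approach is arguably cleaner: it avoids the reduction to a closed orbit and replaces the spectral-sequence degeneration by the splitting lemma already available; it also uses only the standing hypothesis that $\L$ is even rather than reverting to evenness of $f$. The paper's approach, on the other hand, makes the freeness statement more geometrically transparent as a statement about equivariant Borel--Moore homology of $Y_Z$, which is exactly the form reused in the proof of Theorem~\ref{reverse}. Your two flagged subtleties are real but minor: the $B(\pi)$-action compatibility is the naturality of post-composition under adjunction, and the passage from $(\spadesuit)_2$ to polynomiality is a point the paper itself elides (it writes ``By $(\spadesuit_2)$, this is a polynomial algebra'' without further comment).
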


\begin{proof}
Let $j\map{U}{X}$ be the inclusion of an open $G$-stable subset such that $\O_\pi$ is closed in $U$. 
 The module $\D(\pi)$ is of the form $\homb(\L,j_*\F)=\Hom^\bullet(j^*\L,\F)$ and the action of $\Hom^\bullet(\L,\L)$ factors through the quotient $\Hom^\bullet(j^*\L,j^*\L)$.
 Therefore in checking conditions \ref{sc2} and \ref{hwc}, it suffices to assume without loss of generality that $\O_\pi$ is closed in $X$.

Now write $Z$ for $\O_\pi$, $i\map{Z}{X}$ for the inclusion and suppose $Z$ is closed in $X$. Then by condition ($\heartsuit$), $\P_\pi\cong i_*\uk_Z$ up to a shift.
 Recalling that $P(\pi)=\Hom^\bullet (\L,\P_\pi)$, we get $$\End(P(\pi))\cong \Hom^\bullet(i_*\underline{k}_Z,i_*\underline{k}_Z)\cong H^*_{G}(Z).$$ 
 By $(\spadesuit)_2$, this is a polynomial algebra, proving \ref{sc2}.

 Let $Y_Z=f\inv(Z)$.
Since $f$ is $G$-equivariant, the map $Y_Z\to Z$ is a fibration with fibre $f\inv(z)$, where $z$ is some point in $Z$. We consider the Leray spectral sequence for $G$-equivariant sheaf cohomology applied to the dualising sheaf of $Y_Z$. There is no monodromy to worry about because of $(\spadesuit)_2$, so this spectral sequence takes the form
\[
 H^*_G(Z;H^{}_* (f\inv(z)))\implies H^{G}_*(Y_Z).
\]
Here we write $H_*$ for Borel-Moore homology and $H^G_*$ for $G$-equivariant Borel-Moore homology.
Since $f$ is even, this spectral sequence degenerates at the $E_2$ page. The $E_\infty$ page is $P(\pi)\cong H^{G}_*(Y_Z)$. Therefore $P(\pi)$ is free over $\End(P(\pi))\cong H^*_G(Z)$ since the $E_2$ page is obviously free, showing condition {\tt (HWC)}.
\end{proof}

We have thus proved

\begin{theorem}\label{thm:fwd}
 Assume $(\spadesuit)_1$, $(\spadesuit)_2$ and $(\heartsuit)$. Suppose that $f$ is even. Then $A(X,f)\mods$ is polynomial highest weight for the partial order given by orbit closures.
\end{theorem}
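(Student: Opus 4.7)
My plan is to verify the three conditions \ref{sc1}, \ref{sc2} and \ref{hwc} in the definition of polynomial highest weight directly, with the geometric modules $\D(\pi) = \homb(\L, (o_\pi)_*\uk_{\O_\pi})$ playing the role of standard modules. The bulk of this work has been carried out in the preceding propositions of this section, so the theorem amounts to an assembly: Corollary \ref{proofofsc1} provides \ref{sc1}, Proposition \ref{algstd} justifies calling these modules standard in the algebraic sense of Section 2, and the preceding proposition provides \ref{sc2} and \ref{hwc}. Once these pieces are collected the conclusion is immediate from the definition.

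The main conceptual content underlying \ref{sc1} is the construction of a standard flag on each $P(\pi) = \homb(\L, \P_\pi)$. The strategy is to stratify $\supp(\P_\pi) = \overline{\O_\pi}$ by $G$-orbits and peel them off one at a time via the triangle $i_!i^!\P_\pi \to \P_\pi \to j_*j^*\P_\pi \xrightarrow{+1}$. Applying $\homb(\L, -)$ would normally yield a long exact sequence, but Lemma \ref{parityses} exploits evenness of $\L$ together with $!$-evenness of $\P_\pi$ to split it into short exact sequences, and Lemma \ref{evensplit} identifies each section as a direct sum of even shifts of $\D(\sigma)$. Proposition \ref{algstd} then confirms that the geometrically defined $\D(\pi)$ really is the algebraic standard, essentially because summands $\P_\sigma$ with $\sigma \not\succeq \pi$ are supported away from $\O_\pi$ and so contribute no morphisms into $(o_\pi)_*\uk_{\O_\pi}$; the corresponding $\Ext^1$-vanishing then follows formally from the standard flag on $P(\pi)$.

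For \ref{sc2} and \ref{hwc}, I would first use Corollary \ref{cor:fsurj} to reduce to the case that $\O_\pi$ is closed in $X$, so that $\P_\pi \cong (o_\pi)_*\uk_{\O_\pi}$ up to shift. Then $B(\pi) \cong H^*_G(\O_\pi)$, which is polynomial by $(\spadesuit_2)$, giving \ref{sc2}. For \ref{hwc}, one applies the Leray spectral sequence for $f : Y_{\O_\pi} \to \O_\pi$ with dualising sheaf coefficients; evenness of $f$ places the $E_2$-page in even total degree, which combined with $(\spadesuit_2)$ forces degeneration at $E_2$ and yields the required freeness of $P(\pi)$ over $B(\pi)$. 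This degeneration step is the one place where the evenness of $f$ itself (rather than merely of $\L$) is genuinely essential, and I expect it is the main obstacle to relaxing the hypotheses.
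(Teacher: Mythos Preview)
Your proposal is correct and matches the paper's approach exactly: the paper states the theorem with the preamble ``We have thus proved'' immediately after the sequence of results you cite, so the proof is precisely the assembly of Corollary~\ref{proofofsc1}, Proposition~\ref{algstd}, and the subsequent proposition establishing \ref{sc2} and \ref{hwc}. Your summaries of the underlying arguments for each piece are accurate as well.
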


As a consequence, these algebras satisfy the properties developed in \cite{kleshchev} and \cite{fujita}. For example they have finite global dimension and a family of proper standard modules. If they are finite over their centre (which we conjecture is always the case and is known for quiver Hecke algebras \cite[Corollary 2.10]{khovanovlauda}), then they also admit a theory of tilting modules.

\section{The reverse implication}

The aim of this section is to prove our other main theorem, which is a partial converse to Theorem \ref{thm:fwd}. First we record a lemma about spectral sequences.

\begin{lemma}\label{ssdegen}
Consider a cohomological spectral sequence concentrated in the first quadrant that converges to $\D$. Let $B=E_2^{\bullet,0}$. Suppose that $B$ is a polynomial ring and that $B$ acts freely on $\D$. Then the spectral sequence degenerates at the $E_2$-page.
\end{lemma}

\begin{proof}
Write $F_0\subset F_1\subset \cdots$ for the filtration of $\D$ such that $F_i/F_{i-1}\cong E_\infty^{\bullet,i}$. Each $F_i$ is a $B$-module.

We will prove by strong induction on $i$ that $F_i$ is a free $B$-module, generated in degrees at most $i$ and that $E_2^{\bullet,i}=E_\infty^{\bullet,i}$.

Assume then that $b$ is such that the inductive hypothesis holds for $i<b$. Since $F_b/F_{b-1}$ is generated in degrees at least $b$, we have $F_b\cong F_{b-1}\oplus F_b/F_{b-1}$. As $F_b$ is a submodule of $\D$, this implies that $F_b/F_{b-1}$ is torsion-free.

The $B$-module $E_2^{\bullet,b}$ is free over $B$ and generated by $E_2^{0,b}$. By the inductive hypothesis on the spectral sequence degenerating in rows below $b$, we have $E_2^{0,b}=E_\infty^{0,b}$. The presence of any non-trivial differential involving the $b$-th row of the spectral sequence will therefore cause $E_\infty^{\bullet,b}$ to not be torsion-free over $B$, i.e. $F_b/F_{b-1}$ to not be torsion-free over $B$. This is a contradiction. Therefore every differential involving the $b$-th row is trivial.

We have thus shown that $E_2^{\bullet,b}=E_\infty^{\bullet,b}$ and therefore both are free over $B$, generated in degree $b$. This completes the proof of the induction hypothesis for $i=b$, so we have finished the proof of the lemma.
\end{proof}

Here is our other main theorem:

\begin{theorem}\label{reverse}
 Assume $(\spadesuit)_1$, $(\spadesuit)_2$, $(\heartsuit)$ and that $A(X,f)$ is even. Suppoose that $A(X,f)$ is polynomial quasihereditary with respect to the partial order induced by the closure order between orbits.
 Then $f$ is even.
\end{theorem}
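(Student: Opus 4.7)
The plan is to show that for every orbit $\O_\pi$ and every $x\in\O_\pi$, the Borel-Moore homology $H_*(Y_x;k)$ is concentrated in even degrees; combined with $(\spadesuit)_1$, this gives that $f$ is even. First, a computation parallel to the one used for $P(\pi)$ in the forward direction --- proper base change $o_\pi^*\L = \tilde f_*\uk_{f^{-1}(\O_\pi)}$, the adjunction $(\tilde f_*,\tilde f^!)$, and the identification $\tilde f^!\uk_{\O_\pi} = \omega_{f^{-1}(\O_\pi)}[-2\dim\O_\pi]$ (using that $\O_\pi$ is smooth), together with $f^{-1}(\O_\pi)\cong G\times^{G_x}Y_x$ --- yields a natural isomorphism
\[
\D(\pi) \;\cong\; H^{G_x}_{2\dim\O_\pi - \bullet}(Y_x;k)
\]
of graded $B(\pi)$-modules, with $B(\pi)\cong H^*_{G_x}(\pt;k) = H^*(BG_x;k)$.

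Since $A(X,f)$ is even, the summand $P(\pi)$ is even, and hence so is its quotient $\D(\pi)$. By polynomial quasihereditarity and Theorem \ref{affinecps}, $\D(\pi)$ is a finitely generated free right $B(\pi)$-module; since $B(\pi)$ is concentrated in even degrees by $(\spadesuit)_2$, its free generators over $B(\pi)$ can be chosen of even degree, so in particular $\D(\pi)\otimes_{B(\pi)}k$ is concentrated in even degrees. The crucial step is to identify this reduction with $H_*(Y_x;k)$ up to the even shift by $2\dim\O_\pi$. This is an Eilenberg-Moore / Leray-Hirsch type collapse: the Eilenberg-Moore spectral sequence for the pullback
\[
\xymatrix{
Y_x \ar[r]\ar[d] & EG_x\times_{G_x}Y_x \ar[d] \\
\pt \ar[r] & BG_x
}
\]
has $E_2 = \operatorname{Tor}^{H^*(BG_x;k)}_\bullet\bigl(H^{G_x}_*(Y_x;k),\,k\bigr)$ and abuts to $H_*(Y_x;k)$; the freeness of $\D(\pi)$ over $B(\pi)$ kills the higher $\operatorname{Tor}$ and collapses the spectral sequence. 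The connectedness of $G_x$ from $(\spadesuit)_2$ makes $BG_x$ simply connected, which is the standard hypothesis ensuring convergence; alternatively one may work with finite-dimensional approximations $E_N\to B_N$ of $EG_x\to BG_x$ and apply the classical Leray-Hirsch theorem.

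Granting the identification, $H_*(Y_x;k)$ is concentrated in even degrees. Running the argument uniformly over $\pi\in\Pi$ and $x\in\O_\pi$ and using $(\spadesuit)_1$ to cover every point of $X$, we conclude that $f$ is even. The main technical hurdle is justifying the Eilenberg-Moore collapse in the equivariant Borel-Moore setting; the rest is a direct algebraic translation of the forward-direction computations to the reverse implication.
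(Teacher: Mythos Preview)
Your argument conflates two a priori different objects: the \emph{algebraic} standard module $\Delta(\pi)$ (the projective cover of $L(\pi)$ in $\Sigma_\pi$, which by the polynomial quasihereditary hypothesis is free over its endomorphism ring) and the \emph{geometric} module $\Hom^\bullet(\L,(o_\pi)_*\uk_{\O_\pi})\cong H^{G_x}_*(Y_x;k)$. Your base-change computation correctly describes the latter, and the freeness/evenness properties you invoke belong to the former; the entire argument rests on their coincidence. But in the paper that identification is Proposition~\ref{algstd}, proved in \S4 under the standing assumption that $\L$ is even --- precisely what you are trying to establish. Concretely, even the claim ``$\Delta(\pi)$ is a quotient of $P(\pi)$, hence even'' is problematic for the geometric module: the map $\Hom^\bullet(\L,\P_\pi)\to\Hom^\bullet(\L,(o_\pi)_*\uk_{\O_\pi})$ is a priori only one piece of a long exact sequence, and its surjectivity (Lemma~\ref{parityses}) was obtained from parity vanishing. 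Without knowing $\L$ is even you cannot break that long exact sequence, so you cannot transport evenness from $P(\pi)$ to $H^{G_x}_*(Y_x;k)$, nor can you identify $B(\pi)=\End_A(\Delta(\pi))^{\op}$ with $H^*_{G_x}(\pt;k)$.

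The paper sidesteps this circularity by inducting on the number of orbits. For a \emph{closed} orbit $Z$ the identification is free: by minimality $\Delta_Z=P_Z$, and by $(\heartsuit)$ the parity summand supported on $Z$ is $i_*\uk_Z$, so $P_Z\cong H^G_*(Y_Z)$ and $B_Z\cong H^*_G(Z)$ directly. The Leray spectral sequence argument (morally the same as your Eilenberg--Moore collapse) then yields evenness of $H_*(f^{-1}(z))$ over $Z$. The substantial remaining work, which your sketch omits entirely, is to show that $A(U,f)$ for $U=X\setminus Z$ is exactly the quotient of $A(X,f)$ by the heredity ideal attached to $Z$; this is what allows the induction to continue, and it requires the evenness just established over $Z$ (to see that $i^!\L$ splits as in Lemma~\ref{evensplit}) together with an analysis of the four-term exact sequence coming from $i_!i^!\L\to\L\to j_*j^*\L\xrightarrow{+1}$. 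Your ``uniform over $\pi$'' approach cannot work as written; the orbits must be peeled off one at a time so that at each stage the relevant standard module is a projective.
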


\begin{remark}
 We only need to know that $A(X,f)$ is polynomial quasihereditary for some refinement of $\prec$ to a total order.
\end{remark}

\begin{proof}
 We induct on the number of orbits of $G$ on $X$. Let $i\map{Z}{X}$ be the inclusion of a closed orbit. Let $z$ be a point in $Z$. Let $Y_Z=f\inv(Z)$.
 
 By condition $(\heartsuit)$, $i_*\uk_Z$ occurs up to grading shift as a direct summand of $f_*\uk_Y$. Therefore the projective module corresponding to $Z$ is $P_Z:=\homb(\L,i_*\uk_Z)$. Using the properness of $f$ to write $f_*=f_!$ and the usual adjunction and base-change properties, this can also be written as a grading shift of $H_*^{G}(Y_Z).$

Since $Z$ is minimal in the order $\prec$, this module is also the standard module $\D_Z$ associated to the orbit $Z$.
 
The endomorphism algebra of $\D_Z=P_Z$ is $$B_Z:=(\End \D_Z)^\op\cong \homb (i_*\uk_Z,i_*\uk_Z)\cong H^*_G(Z).$$

Again consider the spectral sequence
\[
 H^*_G(Z;H^{}_* (f\inv(z)))\implies H^{G}_*(Y_Z)=\D_Z.
\]
By Lemma \ref{ssdegen}, this spectral sequence degenerates at the $E_2$ page. 

Thus if $f\inv(z)$ fails to be even, then $P_Z=\D_Z$ also fails to be even. Since $P_Z$ is a direct summand up to shift of $A(X,f)$ which is assumed to be even, this proves that $f\inv(z)$ is even.

Now consider $i^!\L$. By base change and smoothness of $Y$, we have $i^!\L=f_*\w_{Y_Z}[-2\dim Y]$. Since $f\map{Y_Z}{Z}$ is a fibration whose fibre has no odd Borel-Moore homology, the object $i^!\L$ is an even object of $D^b_G(Z;k)\cong D_{G_z}^b(\pt;k)$.
By Lemma \ref{evensplit}, it is thus a direct sum of even shifts of $\uk_Z$.

Therefore $\homb(\L,i_!i^!\L)$ is a direct sum of even shifts of $P_Z$. We denote this module $P$.

Let $U=X\setminus Z$ and $j\map{U}{X}$ be the open embedding. Let $$Q_0=\bigoplus_{d\in 2\Z}\Hom(j^*\L,j^*\L[d])\quad \mbox{and}\quad Q_1=\bigoplus_{d\in 2\Z+1}\Hom(j^*\L,j^*\L[d]).$$ We apply $\Hom(\L,-)$ to the triangle
\[
  i_!i^!\L\to \L \to j_*j^*\L \xrightarrow{+1}
\]
and obtain the following exact sequence of $A(X,f)$-modules:
\begin{equation}\label{qses}
 0\to Q_{1} \to P \to A(X,f)\to Q_{0}\to 0
\end{equation}
where $Q_0=\Hom^\bullet(j^*\L,j^*\L)$.

Let $J$ be the heredity ideal in $A(X,f)$ corresponding to the closed orbit $Z$. We first aim to show that $Q_0=A(X,f)/J$. To do this, we need to show
\begin{enumerate}
 \item $\Hom(Q_0,L(\pi_Z))= 0$
 \item $\Hom(Q_0,L(\pi))\neq 0$ for all $\pi\neq\pi_Z$.
 \item $\Ext^1(Q_0,L(\pi))=0$ for all $\pi\neq \pi_Z$.
\end{enumerate}

For (1), suppose there is a nonzero homomorphism from $Q_0$ to $L(\pi_Z)$. It is surjective as $L(\pi_Z)$ is simple, so we obtain a nonzero homomorphism from $P_Z$ to $Q_0$. Therefore there is a nonzero composition $\L\to i_*\uk_Z\to \L\to j_*j^*\L[d]$ for some $d$. But
\[
 \Hom(i_*\uk_Z,j_*j^*\L[d])\cong \Hom(j^*i_*\uk_Z,j^*\L[d])=0
\]
since $j^*i_*=0$, a contradiction.

For (2), if $\pi\neq \pi_Z$, then $j^*\P_\pi\neq 0$. Then $\Hom^\bullet (j^*\L,j^*\P_\pi)$ is a summand of $Q_0$ which is nonzero since it contains the identity morphism on $j^*\P_\pi$. Using (\ref{qses}), we see there is a surjection from $P(\pi)$ to $\Hom^\bullet (j^*\L,j^*\P_\pi)$. Therefore $\Hom^\bullet (j^*\L,j^*\P_\pi)$ has $L(\pi)$ as a quotient, hence the same is true of $Q_0$.

For (3), first note that since $A(X,f)$ is projective, the sequence $(\ref{qses})$ shows that
$\Ext^1(Q_{0},L(\pi))$ is a subquotient of $\Hom(P,L(\pi))$.
Since $P$ is a direct sum of shifts of $P_Z$, we have $\Hom(P,L(\pi))=0$ for all $\pi\neq \pi_Z$, as required.

Therefore $Q_{0}$ is the quotient by the heredity ideal $J$ associated to $Z$. So there is a short exact sequence
\[
 0\to Q_1\to P\to J\to 0.
\]

We have already shown that $P$ is a direct sum of copies of $P_Z$. Since $J$ is the heredity ideal, it is also a projective $A(X,f)$-module, so this short exact sequence splits. Therefore $Q_1$ is a direct sum of copies of shifts of $P_Z$ by the Krull-Schmidt property. If $Q_1\neq 0$, then there must be a nonzero homomorphism from $P_Z$ to $Q_1$. But this is impossible by the same argument just used to prove $\Hom(Q_0,L(\pi_Z))=0$ above.


Now we consider $A(U,f)\cong Q_0$. Since we've just shown $A(U,f)$ is the quotient of $A(X,f)$ by the heredity ideal $J$, it is polynomial quasihereditary. Therefore by induction on the number of orbits we know that $\pi$ has even fibres over $U$. We've also shown that $\pi$ has even fibres over $Z$, completing the proof.
\end{proof}

%
%
%
%
%
%
%
%

\section{Application to quiver Hecke algebras}

Let $Q$ be a quiver of finite Dynkin type. This is a directed graph which becomes a finite type Dynkin diagram when the orientation is forgotten. Let $I$ be the set of vertices of $Q$. Fix $\nu=\sum_{i\in I} \nu_i i \in \N I$. Define
\[
 E_\nu = \prod_{i\to j} \Hom_{\C}(\C^{\nu_i},\C^{\nu_j})
\]
\[
 G_\nu = \prod_{i\in I} GL_{\nu_i}(\C).
\]
When we need to make the dependence on $Q$ explicit in the notation, we write $E_\nu(Q)$.
The stack $[E_\nu/G_\nu]$ is the moduli stack of representations of $Q$ of dimension vector $\nu$.

Let $Y_\nu$ be the variety of pairs $(x,\F^\bullet)$ where $x\in E_\nu$ and $\F^\bullet=(0=\F^0\subset \F^1\subset \cdots\subset \F^{|\nu|}=\oplus_{i\in I}\C^{\nu_i})$ is a full flag in $\oplus_{i\in I}\C^{\nu_i}$ satisfying the conditions $x\F^m\subset \F^m$ and $\F^m=\oplus_{i\in I}(\F^m\cap \C^{\nu_i})$ for all $m$. Let $f\map{Y_\nu}{E_\nu}$ be the $G_\nu$-equivariant morphism sending $(x,\F)$ to $x$.

The following definition of the quiver Hecke algebra is equivalent to the usual one via generators and relations \cite{maksimau}.

\begin{definition}
 The quiver Hecke algebra is the algebra
 \[
  R(\nu):= A(E_\nu,f). 
 \]
\end{definition}

Up to isomorphism, this algebra only depends on the underlying graph of $Q$, not its orientation. This fact follows from the presentation of $R(\nu)$ given in \cite{maksimau}, together with the discussion on change of parameters in \cite{kl2}.

We remark that $Y_\nu$ is smooth, disconnected and not equidimensional. The grading we have put on $R(\nu)$ differs in a trivial way from the usual one, which corresponds to replacing $\uk_{Y_\nu}$ by $IC(Y_\nu,k)$
in the definition of $R(\nu)$.

There is a canonical non-unital inclusion of algebras $R(\la)\otimes R(\mu)\hookrightarrow R(\la+\mu)$. Letting $e_{\la\mu}$ be the image of the unit under this embedding, we define a pair of adjoint induction and restriction functors by
\[
\Ind_{\la,\mu}(X)=R(\la+\mu)e_{\la\mu} \bigotimes_{R(\la)\otimes R(\mu)}X
\]
for $X\in R(\la)\otimes R(\mu)\mods$, and
\[
\Res_{\la,\mu}Y=e_{\la\mu}Y
\]
for $Y\in R(\la+\mu)\mods$.

If $M$ is a $R(\la)$-module and $N$ is a $R(\mu)$-module, we write $M\circ N$ for $\Ind_{\la\mu}(M\boxtimes N)$. The induction and restriction functor satisfy natural associativity properties and we define $M_1\circ\cdots \circ M_n$ and $\Res_{\la_1,\ldots,\la_n}$ accordingly.

Forgetting the orientation on $Q$ gives a Dynkin diagram of type ADE and hence a root system and Weyl group. Let $\{\a_i\}_{i\in I}$ be a corresponding set of simple roots, naturally indexed by $I$.
We identify $\a_i$ with $i$ and hence $\N I$ with the set of nonnegative linear combinations of simple roots. In this way we can talk meaningfully about roots as elements of $\N I$.

Write $\{s_i\}_{i\in I}$ for the generators of the Weyl group and $w_0$ for the longest element.
Let $N=\ell(w_0)$ and $w_0=s_{i_1}\cdots s_{i_N}$ be a reduced expression of $w_0$.
Define
\begin{equation}\label{def:bk}
 \b_k=s_{i_1}\cdots s_{i_{k-1}} \a_{i_k}.
\end{equation}
It is well known that $\b_1,\ldots,\b_N$ is an enumeration of the positive roots. 

Let $\ga_1,\ldots,\ga_N$ be a sequence of linear functionals $\ga_i\map{\N I}{\R}$ such that $\ga_i(\b_i)>0$, $\ga_i(\b_j)\leq 0$ if $i<j$ and $\ga_i(\b_j)\geq 0$ if $i>j$. We give two examples in (\ref{commutegamma}) and (\ref{def:ga}). The latter one is more important for our purposes.

 A \emph{Kostant partition} of $\nu\in\N I$ is a sequence of natural numbers $(n_1,\ldots,n_N)$ such that $\sum_i n_i\b_i =\nu$. Let $KP(\nu)$ be the set of Kostant partitions of $\nu$ and $\kpf(\nu)$ be the number of Kostant partitions of $\nu$.
 
 Define a partial order on the set of Kostant partitions of $\nu$ by
 $(n_1,\ldots,n_N)\preceq (m_1,\ldots,m_N)$ if
 \begin{equation}\label{sevenstar}
  \sum_{t=1}^k \ga_k(\b_t)  n_t \leq \sum_{t=1}^k  \ga_k(\b_t)  m_t
 \end{equation} for all $1\leq k\leq N$. This depends on the choice of $\ga_i$, but we suppress this dependence from the notation.
 
 
 Let $M$ be a $R(m\b_i)$-module, where $m\in \N$ and $\b_i$ is as in (\ref{def:bk}). $M$ is said to be \emph{semicuspidal} if for all $\la,\mu\in \N I$ such that $\Res_{\la,\mu}M\neq 0$, $\la\in \spann_{\N}\{\b_{i},\ldots,\b_N\}$ and $\mu\in \spann_{\N}\{b_1,\ldots,\b_i\}$.

 \begin{lemma}\label{lem:strong}
  Let $M_1,\ldots,M_N$ be semicuspidal representations of $R(m_1\b_1),\ldots, R(m_N\b_N)$ respectively. Suppose that $(n_1,\ldots,n_N)$ is a Kostant partition such that
  \[
   \Res_{n_1\b_1,\ldots,n_N\b_N} (M_1\circ\cdots\circ M_N)\neq 0.
  \]
Then $(m_1,\ldots ,m_N)\preceq (n_1,\ldots,n_N)$.
 \end{lemma}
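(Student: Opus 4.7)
The plan is to combine the Mackey filtration with semicuspidality and the sign pattern of Lemma~\ref{lem:order}. First I would expand $\Res_{n_1\b_1,\ldots,n_N\b_N}(M_1\circ\cdots\circ M_N)$ via the Mackey filtration, writing it as an iterated extension whose layers are parametrised by $N\times N$ matrices $(\mu_{ij})$ with $\mu_{ij}\in\N I$, row sums $\sum_j\mu_{ij}=m_i\b_i$, and column sums $\sum_i\mu_{ij}=n_j\b_j$; each layer is a suitable induction built from the tensor product of the row-restrictions $\Res_{\mu_{i1},\ldots,\mu_{iN}}M_i$. The nonvanishing hypothesis produces at least one matrix $(\mu_{ij})$ for which every row-restriction is nonzero.

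Next I would apply semicuspidality row by row. Setting $\la_{ij}:=\mu_{i1}+\cdots+\mu_{ij}$, transitivity of restriction makes $\Res_{\la_{ij},\,m_i\b_i-\la_{ij}}M_i$ a factor of the multi-restriction, hence nonzero, so semicuspidality of $M_i$ forces
\[
\la_{ij}\in\spann_\N\{\b_1,\ldots,\b_i\}\quad\text{and}\quad m_i\b_i-\la_{ij}\in\spann_\N\{\b_i,\ldots,\b_N\}
\]
for every $0\leq j\leq N$. The decisive step is to upgrade this to the stronger statement that each $\mu_{ij}$ is itself a nonnegative integer multiple of $\b_i$. This rests on the structural fact that every simple semicuspidal module over $R(m\b_i)$ appears as a composition factor of some $L(\b_i)^{\circ m}$, together with the cuspidality vanishing $\Res_{\la,\mu}L(\b_i)=0$ for $\la$ or $\mu$ outside $\N\b_i$; exactness of $\Res$ then transfers the property to an arbitrary semicuspidal $M_i$.

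Writing $\mu_{ij}=a_{ij}\b_i$ with $a_{ij}\in\N$, the row and column constraints become $\sum_j a_{ij}=m_i$ and $\sum_i a_{ij}\b_i=n_j\b_j$. Using $\sum_t n_t\b_t=\sum_t m_t\b_t$, a direct rearrangement yields
\[
\sum_{t=1}^k\langle\ga_k,\b_t\rangle(m_t-n_t)=\sum_{i\leq k,\,j>k}\langle\ga_k,\b_i\rangle\,a_{ij}+\sum_{i>k,\,j\leq k}\bigl(-\langle\ga_k,\b_i\rangle\bigr)a_{ij},
\]
which is nonnegative term by term by Lemma~\ref{lem:order}: $\langle\ga_k,\b_i\rangle\geq 0$ for $i\leq k$ and $\langle\ga_k,\b_i\rangle\leq 0$ for $i>k$. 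This is precisely (\ref{sevenstar}) for every $k$.

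The main obstacle is the upgrade step: the raw span conditions of semicuspidality cannot by themselves force $\mu_{ij}\in\N\b_i$, as already in type $A_2$ one has $\alpha\in\spann_\N\{\alpha,\alpha+\beta\}$. One really does need the structural input on the composition factors of semicuspidal modules; once it is in place, the rest of the argument is routine bookkeeping with the pairing $\langle\ga_k,\b_t\rangle$.
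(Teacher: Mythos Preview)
Your argument is correct and follows the same overall strategy as the paper: Mackey filtration, semicuspidality, and the sign pattern of Lemma~\ref{lem:order}. The paper's execution is more economical in one respect: instead of invoking the full $N\times N$ Mackey matrix, it fixes $k$ and applies the two-term Mackey filtration to
\[
\Res_{\,n_1\b_1+\cdots+n_k\b_k,\;n_{k+1}\b_{k+1}+\cdots+n_N\b_N}\,(M_1\circ\cdots\circ M_N),
\]
obtaining for each $i$ a splitting $m_i\b_i=x_i+y_i$ with $\sum_i x_i=\sum_{t\le k}n_t\b_t$, and then checks directly that
\[
\sum_{t\le k}\langle\ga_k,\b_t\rangle(m_t-n_t)=\Bigl\langle\ga_k,\sum_{i\le k}y_i\Bigr\rangle-\Bigl\langle\ga_k,\sum_{i>k}x_i\Bigr\rangle\ge 0.
\]
This sidesteps your bookkeeping with the matrix $(a_{ij})$, but the substance is identical: to get the signs one needs each $y_i$ (for $i\le k$) and each $x_i$ (for $i>k$) to be a nonnegative multiple of $\b_i$, after which Lemma~\ref{lem:order} applies. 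Your ``upgrade'' step---that any two-part restriction of a semicuspidal $R(m\b_i)$-module lands in $\N\b_i\times\N\b_i$, deduced from the fact that its composition factors come from $L(\b_i)^{\circ m}$---is precisely what the paper is using at this point, though it records it tersely (``$y_i$ is a sum of roots $\le\b_k$'' meaning copies of $\b_i$ with $i\le k$). So your caution about that step is well placed; the structural input you identify is what both arguments rest on, and your version has the virtue of making it explicit.
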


 \begin{proof}
 We require the Mackey filtration for the composite $\Res\circ \Ind$ from \cite[Proposition 2.18]{khovanovlauda} whose most general statement is \cite[Proposition 2.7]{klr1}.
We consider the Mackey filtration for
  \[
   \Res_{n_1\b_1+\cdots+n_k\b_k,n_{k+1}\b_{k+1}+\cdots+n_N\b_N}(M_1\circ\cdots\circ M_N).
  \]
A nonzero subquotient is parametrised by $x_1,\ldots,x_N,y_1,\ldots y_N$ with $\Res_{x_i,y_i}M_i\neq 0$ and $n_1\b_1+\cdots+n_k\b_k=\sum_{i=1}^N x_i$.
If $i< k$, then as $M_i$ is semicuspidal, $y_i$ is a sum of roots $\b_j$ with $j\leq i$. So by the definition of $\ga_k$, $\ga_k(y_i)\leq 0$. 
If $i\geq k$ then again by semicuspidality, $x_i$ is a sum of roots $\b_j$ with $j\geq i$, so again by the definition of $\ga_k$, $\ga_k(x_i) \geq 0$ for such $i$.


 We have $m_i=x_i+y_i$ and $n_{k+1}\b_{k+1}+\cdots+n_N\b_N=y_1+\cdots+y_N$. Therefore
\[
\sum_{s=k+1}^N n_s\b_s-\sum_{s=k+1}^N m_s\b_s=(y_1+\cdots+y_k)-(x_{k+1}+\cdots+x_N).
\]
Apply $\ga_k$ to get $\ga_k(\sum_{i=1}^k m_i\b_i)\geq \ga_k(\sum_{i=1}^k n_i\b_i)$ for all $k$, as required.
\end{proof}


Lemma \ref{lem:strong} is precisely what is needed to run the arguments of \cite{klr1} and \cite{bkm} for the partial order $\prec$ rather than the bilexicographical order considered in those papers. We thus have the following corollary:
 
\begin{corollary}\label{refine}
 The results of \cite{klr1} and \cite{bkm}, as well as Theorem \ref{thm:klrhw} below, are valid with this more refined partial order replacing the bilexicographical order in those papers.
\end{corollary}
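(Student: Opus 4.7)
The plan is to trace through the arguments of \cite{klr1} and \cite{bkm} and observe that the bilexicographical order enters those proofs only through a restriction-theoretic statement of exactly the form of Lemma \ref{lem:strong}: namely, that a nonzero restriction of a product of semicuspidals controls the multidegree via a $\preceq$-bound. Once this is verified, the substitution of $\prec$ for the bilexicographical order is mechanical.

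Concretely, I would enumerate the places where the bilex order appears in \cite{klr1, bkm}. These are, roughly: (i) the definition of the Kostant partition of an irreducible as the maximal multidegree occurring in its restriction; (ii) upper-triangularity of shuffle-type pairings between induction products of cuspidals; (iii) the identification of proper standard modules as heads of cuspidal induction products with a prescribed leading term; and (iv) the inductive construction of the heredity chain for $R(\nu)$. For each, the only combinatorial input is a statement matching Lemma \ref{lem:strong}, together with the formal properties (reflexivity, transitivity, antisymmetry) of the order. Antisymmetry of $\prec$ follows from Lemma \ref{lem:order}, since the transition matrix $(\langle\ga_k,\b_l\rangle)_{k\leq l}$ is upper-triangular with strictly positive diagonal entries, so \eqref{sevenstar} combined with the reverse inequality forces equality of all partial sums, and hence of all coefficients.

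With this catalogue in hand, the remaining work is to go through each step in \cite{klr1, bkm} and in the forthcoming Theorem \ref{thm:klrhw}, and replace the invocation of the bilex version of the restriction lemma by Lemma \ref{lem:strong}. The main obstacle I expect is purely bookkeeping: one must check that no step silently uses totality of the bilex order, for instance by choosing a unique minimal or maximal element of a set of Kostant partitions. Any such usage can be repaired by fixing an arbitrary total order refining $\prec$ before running the induction; since both Lemma \ref{lem:strong} and the resulting categorical conclusions --- existence of standard modules, polynomial quasiheredity, classification of simples by Kostant partitions --- are independent of the chosen refinement, the conclusions of \cite{klr1, bkm} transfer to the partial order $\prec$ as stated.
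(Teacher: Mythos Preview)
Your proposal is correct and matches the paper's own justification: the paper does not give a separate proof of this corollary but simply asserts, in the sentence preceding it, that Lemma~\ref{lem:strong} is precisely what is needed to run the arguments of \cite{klr1} and \cite{bkm} with $\prec$ in place of the bilexicographical order. Your write-up is a faithful elaboration of that one-line claim, including the sensible sanity checks (antisymmetry of $\prec$, and the observation that any incidental use of totality can be handled by refining $\prec$ to a total order) that the paper leaves implicit.
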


One example of an allowable choice of $\{\ga_k\}$ is
\begin{equation}\label{commutegamma}
 \ga_k=-s_{i_1}\cdots s_{i_k}\w^\vee_{i_k}.
\end{equation}
where the $\w^\vee_i$ are the fundamental coweights.

Two reduced expressions are said to be in the same commutation class if they can be reached from each other by only applying the commuting braid relations $s_is_j=s_js_i$. The following lemma is obvious.

\begin{lemma}
 If $\ii$ and $\jj$ are two reduced expressions in the same commutation class, then the corresponding partial orders defined by (\ref{sevenstar}) and (\ref{commutegamma}) on the set of Kostant partitions are the same.
\end{lemma}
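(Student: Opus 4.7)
The plan is to reduce to a single elementary commutation move and explicitly check that the set of linear inequalities defining $\preceq$ is preserved. Since any two reduced expressions in the same commutation class are connected by a finite sequence of swaps $(\ldots,i_k,i_{k+1},\ldots) \leftrightarrow (\ldots,i_{k+1},i_k,\ldots)$ where $i_k$ and $i_{k+1}$ are non-adjacent Dynkin nodes (i.e.\ $\langle \alpha_{i_k},\alpha_{i_{k+1}}^\vee\rangle=0$), it suffices to treat one such swap. Call the resulting expressions $\ii$ and $\jj$ and write $w=s_{i_1}\cdots s_{i_{k-1}}$. Throughout, I would identify a Kostant partition with its underlying function $K\colon\Phi^+\to\N$; a commutation move permutes only the indexing of the sequence $(n_1,\ldots,n_N)$, so the set being ordered is canonically the same and only the defining inequalities need comparison.

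The key computation is to compare the pairs $(\b_t,\ga_t)$ before and after the swap. Since $s_{i_k}$ commutes with $s_{i_{k+1}}$ and both fix $\alpha_{i_{k+1}}^\vee,\alpha_{i_k}^\vee,\omega_{i_{k+1}}^\vee,\omega_{i_k}^\vee$ across the non-adjacent indices, a direct calculation gives
\[
\b_k^\ii=w\alpha_{i_k}=\b_{k+1}^\jj,\qquad \b_{k+1}^\ii=w\alpha_{i_{k+1}}=\b_k^\jj,
\]
\[
\ga_k^\ii=w(\alpha_{i_k}^\vee-\omega_{i_k}^\vee)=\ga_{k+1}^\jj,\qquad \ga_{k+1}^\ii=w(\alpha_{i_{k+1}}^\vee-\omega_{i_{k+1}}^\vee)=\ga_k^\jj,
\]
while for all $t\notin\{k,k+1\}$ both $\b_t$ and $\ga_t$ are unchanged because the group element $s_{i_1}\cdots s_{i_t}$ is the same in either expression.

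Next I would verify that the inequalities \eqref{sevenstar} match up one-for-one: for $t<k$ they are literally identical; for $t>k+1$ the sums run over the same first $t$ positive roots with the same coefficients; and the two inequalities indexed by $k,k+1$ in $\ii$ correspond to those indexed by $k+1,k$ in $\jj$. For the last point, comparing the inequality at $k$ in $\ii$ with the inequality at $k+1$ in $\jj$ leaves a single discrepancy of the form $\langle \ga_{k+1}^\ii,\b_k^\ii\rangle\, K(\b_k^\ii)$, which by the above equals
\[
\langle \alpha_{i_{k+1}}^\vee-\omega_{i_{k+1}}^\vee,\,\alpha_{i_k}\rangle\,K(\b_k^\ii)=0,
\]
using $\langle\alpha_{i_{k+1}}^\vee,\alpha_{i_k}\rangle=0$ (the commutation hypothesis) and $\langle \omega_{i_{k+1}}^\vee,\alpha_{i_k}\rangle=\delta_{i_k,i_{k+1}}=0$. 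The symmetric cross-term vanishes for the same reason.

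There is no real obstacle here; the lemma is a direct computation, and the only thing worth highlighting is that the vanishing of the cross-terms is precisely the Cartan-level shadow of the commutation relation $s_{i_k}s_{i_{k+1}}=s_{i_{k+1}}s_{i_k}$, which is exactly what makes the swap an allowed move in the first place.
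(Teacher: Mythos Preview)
Your argument is correct: reducing to a single commutation swap and checking that the two families of linear forms $\sum_{t\leq k}\langle\ga_k,\b_t\rangle(-)$ match up is exactly the right computation, and the vanishing of the cross-terms $\langle\ga_{k+1}^{\ii},\b_k^{\ii}\rangle$ and $\langle\ga_k^{\ii},\b_{k+1}^{\ii}\rangle$ follows precisely from non-adjacency of $i_k$ and $i_{k+1}$ as you say. (One tiny slip of labelling: the discrepancy you display arises when comparing the $\ii$-inequality at $k+1$ with the $\jj$-inequality at $k$, not the pairing you name; but the symmetric term covers the other case and both vanish, so nothing is lost.)

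As for comparison with the paper: there is nothing to compare. The paper declares the lemma ``obvious'' and gives no proof. Your write-up is simply an honest unpacking of that word, and is the natural direct verification.
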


As a Corollary, we recover the following c.f. \cite[Theorem 6.8]{ohsuh} and \cite[Theorem 5.10]{oh}.

\begin{corollary}\label{cor:comm}
 Two reduced expressions in the same commutation class induce the same polynomial highest weight structure on the category of representations of a quiver Hecke algebra.
\end{corollary}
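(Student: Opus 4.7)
The plan is to combine the preceding lemma with the observation that, once a labeling of simples by Kostant partitions is fixed, the polynomial highest weight datum is determined by the partial order alone. Indeed, Section~2 defines $\D(\pi)$ intrinsically as the projective cover of $L(\pi)$ in the Serre subcategory $\Sigma_\pi$ cut out by the partial order, so two highest weight structures sharing the same labeling and the same partial order automatically share the same standard modules.

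With this in hand, I would argue as follows. Let $\ii$ and $\jj$ be reduced expressions in the same commutation class, and let $\prec_{\ii}$, $\prec_{\jj}$ denote the partial orders on $\KP(\nu)$ defined by \eqref{sevenstar}. The preceding lemma gives $\prec_{\ii}\;=\;\prec_{\jj}$. By Corollary~\ref{refine}, each of these orders underlies a polynomial highest weight structure on $R(\nu)\mods$ via Theorem~\ref{thm:klrhw}. Since $\Sigma_\pi$ depends only on the partial order, the standard modules $\D(\pi)$ produced for $\ii$ and for $\jj$ coincide, and the two structures agree on the nose.

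The one compatibility remaining to be checked is that the labeling bijection between simple $R(\nu)$-modules and $\KP(\nu)$ provided by \cite{bkm} is itself invariant under the commutation class. I would reduce this to a single commutation move $s_{i_k}s_{i_{k+1}}=s_{i_{k+1}}s_{i_k}$, which has the effect of swapping the adjacent roots $\b_k$ and $\b_{k+1}$ in the enumeration of positive roots. The cuspidal module attached to each individual positive root is an intrinsic invariant of $R(\b)\mods$ (it is characterized within a single root space without reference to the convex order), and by orthogonality of $\b_k$ and $\b_{k+1}$ the induction products $L(\b_k)^{\circ n_k}\circ L(\b_{k+1})^{\circ n_{k+1}}$ and its reverse have isomorphic simple heads. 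Iterating over a chain of commutation moves from $\ii$ to $\jj$ completes the argument.

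I expect this last bookkeeping on the labeling, rather than any genuine categorical input, to be the main (and still minor) obstacle: the substantive content is already contained in the preceding lemma and in Corollary~\ref{refine}.
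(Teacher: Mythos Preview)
Your proposal is correct and aligned with the paper's approach: the paper gives no explicit proof, treating the corollary as immediate from the preceding lemma that the partial orders coincide, and your argument spells out exactly why this suffices (the standard modules $\D(\pi)$ are intrinsically defined from the partial order via $\Sigma_\pi$). Your additional check that the labeling of simples by Kostant partitions is invariant under a commutation move is a fair point the paper leaves implicit; your reasoning via the orthogonality of $\b_k$ and $\b_{k+1}$ is sound and more careful than the paper itself.
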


\begin{remark}
 All of the results proven so far in this section, together with Theorem \ref{thm:klrhw} below, are purely algebraic, so hold for all finite type quiver Hecke algebras, not just the geometric ones considered in this paper.
\end{remark}

For geometric applications, we require a different choice of $\ga_i$. 
Let $\vv=\sum_{i\in I} v_i i$ and $\vw=\sum_{i\in I}w_i i$ be two elements of $\N I$. We define the Euler form by
\[
\langle \vv , \vw \rangle =\sum_{i\in I} v_i w_i - \sum_{i\to j}v_i w_j.
\]
Define
\begin{equation}\label{def:ga}
\ga_i(\vw)=\langle \vw,\b_i\rangle.
\end{equation}

\begin{theorem}[Gabriel's Theorem, \cite{gabriel}]\label{thm:gabriel}
 The function $M\mapsto \dim M$ induces a bijection between the set of isomorphism classes of indecomposable representations of $Q$ and the set $\Phi^+$ of positive roots.
\end{theorem}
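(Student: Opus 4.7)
The plan is to follow the classical Bernstein-Gelfand-Ponomarev strategy, reducing every indecomposable to a simple module via reflection functors and leveraging the positivity of the Tits form in finite ADE type. The argument naturally splits into three parts: (i) the dimension vector of any indecomposable representation is a positive root; (ii) every positive root arises as the dimension vector of an indecomposable; (iii) the indecomposable realising a given positive root is unique up to isomorphism.

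First I would set up the Tits form $q(\a):=\sum_{i\in I} \a_i^2 - \sum_{i\to j}\a_i\a_j$ on $\Z I$. For $Q$ of finite ADE type, $q$ is positive definite, and a vector $\a\in\N I\setminus\{0\}$ is a positive root if and only if $q(\a)=1$. The key homological identity I would then establish is
\[
 \dim\End(M)-\dim\Ext^1(M,M)=q(\dim M)
\]
for any $M\in \Rep(Q)$. This follows from orbit-stabiliser applied to the $G_\nu$-action on $E_\nu$: the codimension of $G_\nu\cdot M$ in $E_\nu$ equals $\dim\Ext^1(M,M)$, while its dimension is $\dim G_\nu-\dim\Aut(M)$, and the combinatorics of $\dim E_\nu-\dim G_\nu$ is $-q(\dim M)$.

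Next I would recall the reflection functors $\Sigma_i^\pm$ of Bernstein-Gelfand-Ponomarev, together with the Coxeter functor $C^+=\Sigma_{i_n}^+\cdots \Sigma_{i_1}^+$ built from an admissible ordering $i_1,\ldots,i_n$ (each $i_k$ being a sink in $s_{i_{k-1}}\cdots s_{i_1}Q$). The crucial properties are that $\Sigma_i^+$ sends any indecomposable $M\not\cong S_i$ to an indecomposable with dimension vector $s_i(\dim M)$, and that $\Sigma_i^+ S_i = 0$. Given an indecomposable $M$, iteration of $C^+$ must, after finitely many steps, meet a simple module at some intermediate reflection; this finiteness is forced by positive definiteness of $q$, since iterated Coxeter action on a positive root must eventually exit the positive cone. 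Reversing with $\Sigma_{i_j}^-$ recovers $M$ from some $S_{i_k}$, and the resulting expression for $\dim M$ is a positive root of the form $s_{i_1}\cdots s_{i_{k-1}}\a_{i_k}$, proving (i). The same machinery delivers (ii) by applying the reverse sequence to $S_{i_k}$ to realise any prescribed positive root, and (iii) because the reflection functors implement mutually inverse bijections on indecomposables away from the simples they annihilate.

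The main obstacle is the simultaneous control of indecomposability and of Ext-vanishing under the Coxeter iteration: one cannot blithely assume $\Ext^1(M,M)=0$ for indecomposable $M$ without risking circularity. I would handle this by induction on the total dimension $|\dim M|$: simple modules have vanishing self-extensions trivially, reflection functors preserve this property on non-simple indecomposables (a direct check from their kernel/cokernel description), and the Euler form identity then forces $q(\dim M)=\dim\End(M)\geq 1$, which by positive definiteness of $q$ and integrality pins down $q(\dim M)=1$, hence $\dim M\in\Phi^+$. The only remaining subtlety is finiteness of $\Rep(Q)/{\cong}$ relative to each positive root, which is baked into the reflection functor bijection.
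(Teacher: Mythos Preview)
The paper does not give a proof of this statement at all: Theorem~\ref{thm:gabriel} is quoted as a classical result with a bare citation to Gabriel's original paper \cite{gabriel}, and the surrounding text immediately moves on to use it. So there is no ``paper's own proof'' to compare against.

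Your proposal is the standard Bernstein--Gelfand--Ponomarev proof, and it is correct in outline. A couple of minor comments. First, your handling of the ``main obstacle'' is slightly overcomplicated: once you have the Euler form identity $\dim\End(M)-\dim\Ext^1(M,M)=q(\dim M)$ and positive definiteness of $q$, indecomposability alone gives $\dim\End(M)\geq 1$, hence $q(\dim M)\geq 1-\dim\Ext^1(M,M)$; but what you actually need for the reflection argument is not $\Ext^1(M,M)=0$ per se, only that the reflection functors preserve indecomposability away from the relevant simple and change the dimension vector by $s_i$. That is a direct linear-algebra check on the kernel/cokernel description and does not require any Ext input, so the circularity worry is a non-issue. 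Second, the termination of the Coxeter iteration is cleaner to phrase via the fact that for finite Weyl groups the Coxeter element has finite order and no nonzero fixed vectors, so the orbit of any nonzero vector under powers of $c$ cannot stay in the positive cone forever; your phrasing is fine but this is the sharp statement underneath it. Note also that the paper separately cites \cite{bgp} for the reflection functors, so your chosen route is entirely consistent with the references already in play.
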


For each $\a\in\Phi^+$, let $M(\a)$ be the indecomposable representation of $Q$ of dimension $\a$.

A reduced expression $s_{i_1}\cdots s_{i_l}$ is said to be \emph{adapted} to $Q$ if for each $1\leq k\leq l$, the vertex $i_k$ is a sink of the quiver $\s_{i_{k-1}}\cdots \s_{i_2}\s_{i_1}Q$. Here $\s_i$ is the operation on a quiver which reverses the direction of all arrows incident to the vertex $i$.
There exists a reduced expression of $w_0$ adapted to $Q$ \cite[Proposition 4.12(b)]{lusztigoneofthem}. Choose such a reduced expression and define the roots $\b_k$ by (\ref{def:bk}).

It is shown in \cite[p59]{ringel} that
\[
\dim\Hom(M(\b_k),M(\b_l))=\langle \b_k,\b_l\rangle \quad \text{if}\  k\leq l
\]
and
\[
\dim\Ext^1(M(\b_k),M(\b_l))=-\langle \b_k,\b_l\rangle \quad \text{if}\ k>l.
\]
In particular the Euler form satisfies $\langle\b_k,\b_l\rangle\geq 0$ if $k\leq l$ and $\langle \b_k,\b_l\rangle \leq 0$ if $k>l$, so defining $\{\ga_k\}$ as in (\ref{def:ga}) is admissible.

\begin{theorem}\cite[p59]{ringel}\label{triangularity}
Suppose the reduced expression $w_0=s_{i_1}\cdots s_{i_N}$ is adapted to $Q$. Then 
\[
 \dim\Hom(M(\b_k),M(\b_l))=\max( \ga_k(\b_l),0).
\]
\end{theorem}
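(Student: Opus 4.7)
The plan is to apply BGP reflection functors sequentially along the adapted reduced expression, reducing the Hom computation to the extraction of a single coordinate of a reflected dimension vector. Set $Q_m := \sigma_{i_{m-1}}\cdots\sigma_{i_1}Q$ (with $Q_1 = Q$) and $\Phi_m := \Sigma_{i_{m-1}}^+\cdots\Sigma_{i_1}^+\colon\operatorname{rep}(Q)\to\operatorname{rep}(Q_m)$, where $\Sigma_i^+$ is the BGP reflection functor at a sink $i$. The adapted hypothesis ensures that $i_m$ is a sink of $Q_m$, so each $\Sigma_{i_m}^+$ is defined; I would use the standard facts that $\Sigma_{i_m}^+$ acts by the reflection $s_{i_m}$ on dimension vectors, kills the simple $S_{i_m}$, and restricts to an equivalence on the full subcategory of representations with no $S_{i_m}$ summand (hence preserves $\Hom$ between any two such objects).

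First I would prove by induction on $m$ that for $l \geq m$, $\Phi_m(M(\beta_l))$ is a nonzero indecomposable in $\operatorname{rep}(Q_m)$ of dimension vector $s_{i_{m-1}}\cdots s_{i_1}\beta_l$; in particular $\Phi_m(M(\beta_m))\cong S_{i_m}$ since that vector collapses to $\alpha_{i_m}$. The inductive step needs only that at each intermediate stage the object being reflected is not the simple being killed, i.e.\ $s_{i_{m-2}}\cdots s_{i_1}\beta_l\neq \alpha_{i_{m-1}}$, which reduces to $l\neq m-1$.

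For $k\leq l$, Hom-preservation across the first $k-1$ reflections yields
\[
 \Hom_Q(M(\beta_k),M(\beta_l)) \;\cong\; \Hom_{Q_k}\bigl(S_{i_k},\,\Phi_k(M(\beta_l))\bigr).
\]
Since $i_k$ is a sink of $Q_k$ with no outgoing arrows, a morphism $S_{i_k}\to N$ is freely determined by an arbitrary element of $N_{i_k}$, so the right-hand side equals $\dim\bigl(\Phi_k(M(\beta_l))\bigr)_{i_k}$, which is the coefficient of $\alpha_{i_k}$ in $s_{i_{k-1}}\cdots s_{i_1}\beta_l$. By Weyl-invariance of the root-coweight pairing this coefficient equals $\langle \beta_l,\,s_{i_1}\cdots s_{i_{k-1}}\omega^\vee_{i_k}\rangle$, which (up to the $\gamma_k$-conventions of Section~5) agrees with $\langle\gamma_k,\beta_l\rangle$ and is manifestly non-negative. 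Specialising to $k=l$ gives $1$, matching $\End(M(\beta_k))\cong k$.

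For $k>l$, Lemma~\ref{lem:order} supplies $\langle\gamma_k,\beta_l\rangle\leq 0$, so the claim reduces to $\Hom_Q(M(\beta_k),M(\beta_l))=0$. Applying $\Phi_l$ (Hom-preserving by the previous paragraph) converts this into $\Hom_{Q_l}\bigl(\Phi_l(M(\beta_k)),S_{i_l}\bigr)$; here $\Phi_l(M(\beta_k))$ is indecomposable and distinct from $S_{i_l}$, as their dimension vectors differ. Since $i_l$ is a sink of $Q_l$, the simple $S_{i_l}$ is projective in $\operatorname{rep}(Q_l)$, so $\Ext^1(S_{i_l},-)=0$; any surjection from an indecomposable onto $S_{i_l}$ would then split, forcing an $S_{i_l}$ direct summand and contradicting indecomposability. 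The main technical obstacle in executing this plan is the bookkeeping underlying the inductive Hom-preservation claim, namely verifying at every intermediate stage $m<\min(k,l)$ that neither $\Phi_m(M(\beta_k))$ nor $\Phi_m(M(\beta_l))$ coincides with the simple being reflected away; once that is in hand, the combinatorics of the adapted expression carries everything through.
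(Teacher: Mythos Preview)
The paper does not supply a proof of this statement; it is quoted from Ringel, so there is no paper-side argument to compare against. Your BGP-reflection approach is the standard route to such formulae and the geometric core of your argument is sound: for $k\le l$ the reduction to $\Hom_{Q_k}(S_{i_k},\Phi_k(M(\beta_l)))$ and the identification of that with the $i_k$-coordinate of $s_{i_{k-1}}\cdots s_{i_1}\beta_l$ are both correct, and the vanishing for $k>l$ via projectivity of $S_{i_l}$ at a sink is clean.

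There is, however, a genuine gap precisely at the step you flag as ``up to the $\gamma_k$-conventions''. The quantity you compute, $\langle s_{i_1}\cdots s_{i_{k-1}}\omega^\vee_{i_k},\beta_l\rangle$, is \emph{not} equal to $\langle\gamma_k,\beta_l\rangle$ for the paper's $\gamma_k=-s_{i_1}\cdots s_{i_k}\omega^\vee_{i_k}$. A direct calculation gives
\[
\gamma_k=s_{i_1}\cdots s_{i_{k-1}}(\alpha^\vee_{i_k}-\omega^\vee_{i_k})=\beta_k^\vee-s_{i_1}\cdots s_{i_{k-1}}\omega^\vee_{i_k},
\]
so the two pairings differ by $\langle\beta_k^\vee,\beta_l\rangle$. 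Already in $A_2$ with $Q=1\!\leftarrow\!2$ and $w_0=s_1s_2s_1$ one has $\dim\Hom(M(\beta_1),M(\beta_2))=\dim\Hom(S_1,P_2)=1$ whereas $\langle\gamma_1,\beta_2\rangle=0$, so the identity you assert fails. In fact your computation matches $\max(\langle\gamma_l,\beta_k\rangle,0)$ rather than the printed $\max(\langle\gamma_k,\beta_l\rangle,0)$; the statement as typeset carries an index transposition (consistent with the visible typo in Lemma~\ref{lem:order}, where the second clause should read $k\ge l$). Your appeal to Lemma~\ref{lem:order} in the $k>l$ case is likewise misapplied: as written the lemma gives $\langle\gamma_k,\beta_l\rangle\le 0$ for $k<l$, not for $k>l$. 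None of this is a matter of convention --- it requires either correcting the target formula or carrying out the extra combinatorial identity relating $\langle s_{i_1}\cdots s_{i_{k-1}}\omega^\vee_{i_k},\beta_l\rangle$ to the $\gamma$-pairing, which your write-up omits.
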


Given a Kostant partition $\la=({n_1},\ldots,{n_N})$, define
\[
 M(\la)=\bigoplus_{i=1}^N M(\b_i)^{\oplus n_i}.
\]
There is a bijection between $KP(\nu)$ and the set of $G_\nu$-orbits on $E_\nu$ where the Kostant partition $\la$ is sent to the orbit of modules isomorphic to $M(\la)$.

The following is related to \cite[\S 4.3]{baumann}.

 \begin{theorem}
Suppose that the reduced expression $w_0=s_{i_1}\cdots s_{i_N}$ is adapted to $Q$ and consider the corresponding partial order $\prec$ on the set of Kostant partitions $KP(\nu)$, equivalently on the set of $G_\nu$-orbits on $E_\nu$. 
Then the partial order $\prec$ on $\KP(\nu)$ describes the closure order on the set of $G_\nu$-orbits on $E_\nu$.
\end{theorem}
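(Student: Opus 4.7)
The plan is to introduce the hom-order on representations of $Q$ as a convenient intermediary, and to establish in turn that (a) the combinatorial order $\preceq$ of (\ref{sevenstar}) agrees with this hom-order under the Gabriel parametrisation, and (b) the hom-order agrees with the orbit closure order for Dynkin quivers.

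For (a), I would compute the hom-space dimensions explicitly. Using additivity of $\Hom$ in the direct sum decomposition $M(\la)=\bigoplus_t M(\b_t)^{\oplus \ell_t}$ together with Ringel's formula (Theorem \ref{triangularity}),
\[
\dim\Hom(M(\b_k),M(\la))\;=\;\sum_{t=1}^N \ell_t\,\max(\langle\ga_k,\b_t\rangle,0)\;=\;\sum_{t=1}^k\langle\ga_k,\b_t\rangle\,\ell_t,
\]
where the second equality drops the $t>k$ terms by Lemma \ref{lem:order}. The resulting expression is exactly the left-hand side of (\ref{sevenstar}), so $\la\preceq\mu$ is equivalent to the inequalities $\dim\Hom(M(\b_k),M(\la))\leq\dim\Hom(M(\b_k),M(\mu))$ holding for all $k$. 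Since by Gabriel's Theorem \ref{thm:gabriel} the indecomposable representations of $Q$ are precisely the $M(\b_k)$, testing against these exhausts the hom-order.

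For (b), one direction is immediate from upper semicontinuity of $\dim\Hom(X,-)$ in flat families of representations: if $\mathcal{O}_{M(\la)}\subseteq\overline{\mathcal{O}_{M(\mu)}}$, then $\dim\Hom(X,M(\la))\geq\dim\Hom(X,M(\mu))$ for every $X$. The converse, specific to representation-finite algebras, is Bongartz's theorem on minimal degenerations, which asserts that for Dynkin quivers the hom-order on $\Rep Q$ coincides with the orbit closure order. Composing (a) and (b) identifies $\preceq$ with the closure order on $KP(\nu)$, as claimed.

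The main obstacle is precisely the nontrivial direction of Bongartz's theorem in (b), where Dynkin-finiteness enters essentially through Auslander--Reiten-theoretic input and cannot be replaced by a general semicontinuity argument. Once that input is available, the remainder is a direct unwinding of Ringel's Hom formula against the sign pattern of Lemma \ref{lem:order}; one should also keep track of the orientation convention so that the direction of $\preceq$ in (\ref{sevenstar}) matches the closure order rather than its opposite.
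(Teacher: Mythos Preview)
Your proposal is correct and takes essentially the same approach as the paper: both invoke Bongartz's theorem to identify the closure order with the hom-order, and then use Ringel's formula (Theorem \ref{triangularity}) together with the sign pattern of Lemma \ref{lem:order} to match the hom-order with the combinatorial order of (\ref{sevenstar}). The only cosmetic difference is that the paper quotes Bongartz in the form $\dim\Hom(M,X)\leq\dim\Hom(N,X)$ for all $X$, whereas you test against indecomposables in the first argument; by additivity and Gabriel's theorem these amount to the same thing, and your version makes explicit the short calculation the paper leaves to the reader.
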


 \begin{proof}
  By \cite[Proposition 3.2]{bongarz}, the orbit closure partial order is given by $M\preceq N$ if and only if
$\dim\Hom(M,X)\leq \dim\Hom(N,X)$
for all $X$. The result now follows from Theorem \ref{triangularity}.
  \end{proof}

%
%
%

\begin{theorem}\label{thm:checkheartsuit}
 The $G_\nu$-equivariant map $f\map{Y_\nu}{E_\nu}$ satisfies the conditions $(\spadesuit)_1$, $(\spadesuit)_2$ and $(\heartsuit)$, and $A(X,f)$ is even.
\end{theorem}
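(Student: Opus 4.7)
The plan is to verify the three geometric conditions and the evenness of $A(X,f)$ in turn, with most of the substance falling into $(\heartsuit)$. First, Gabriel's theorem (Theorem \ref{thm:gabriel}) combined with Krull--Schmidt identifies $G_\nu$-orbits on $E_\nu$ bijectively with $KP(\nu)$, sending a Kostant partition $\la$ to the orbit of $M(\la)$; this set is finite, establishing $(\spadesuit)_1$. For $(\spadesuit)_2$, the stabiliser of $x\in E_\nu$ is $\Aut(M_x)$, the unit group of the finite-dimensional associative $\C$-algebra $\End(M_x)$. It is therefore connected, being a Zariski-open subset of the affine space $\End(M_x)$, and for $M_x=\bigoplus_\b M(\b)^{\oplus n_\b}$ in Dynkin type its reductive quotient is $\prod_\b GL_{n_\b}(\C)$ since $\End(M(\b))=\C$ for each indecomposable. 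The criterion stated immediately after $(\spadesuit)_2$ then gives the required vanishing.

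For evenness of $A(X,f)$, I would prove the stronger statement that $f$ itself is even and appeal to Lemma \ref{evenimplieseven}. The fibre $Y_x=f^{-1}(x)$ is the quiver flag variety of complete $I$-graded subrepresentation flags of $M_x$. I would establish an affine paving of $Y_x$ by induction on $|\nu|$: the choice of the bottom step $\F^1$ (a simple subrepresentation of $M_x$ concentrated at some vertex $i$) stratifies $Y_x$ into pieces each fibred over a smaller quiver flag variety of $M_x/\F^1$ with affine-space fibres, the affinity coming from the vector-space of sub-line-choices at vertex $i$. Iterating shows $Y_x$ admits a paving by affine spaces, hence $H_j(Y_x;k)=0$ for all odd $j$.

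Finally, for $(\heartsuit)$: since $f$ is now proper and even and $Y_\nu$ is smooth, $\L=f_*\uk_{Y_\nu}$ is even in $D^b_{G_\nu}(E_\nu;k)$ by \cite[Proposition 2.34]{parity}. Condition $(\spadesuit)_2$ forces every $G_\nu$-equivariant local system on an orbit to be trivial, so the classification of indecomposable parity sheaves \cite{parity} yields at most one indecomposable summand $\mathcal{E}_\la$ of $\L$ with $\overline{\supp \mathcal{E}_\la}=\overline{\O_\la}$, and applying Lemma \ref{evensplit} fibrewise over a point of $\O_\la$ shows that any such $\mathcal{E}_\la$ satisfies $\mathcal{E}_\la|_{\O_\la}\cong \uk_{\O_\la}[d]$ for some integer $d$. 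This gives injectivity of the closure-of-support map and the constant-sheaf clause. The main obstacle is surjectivity: showing that $\mathcal{E}_\la$ actually occurs as a summand for every $\la$. For this I would, given $\la=(n_1,\ldots,n_N)$ and a reduced expression of $w_0$ adapted to $Q$, construct a concrete sequence $\mathbf{i}\in I^{|\nu|}$ by concatenating $n_k$ copies of the vertex-data for each $\b_k$ in Kostant order, so that the corresponding connected component of $Y_\nu$ is the Reineke-style quiver flag desingularisation of $\overline{\O_\la}$ and maps birationally onto it. A generic-point calculation over $\O_\la$ --- using $(\heartsuit)$'s restriction clause already verified injectively, together with the birationality --- then forces $\mathcal{E}_\la$ to appear with positive multiplicity in $\L$, completing the bijection.
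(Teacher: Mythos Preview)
Your treatments of $(\spadesuit)_1$ and $(\spadesuit)_2$ are fine and match the paper's. The serious gap is in your argument for evenness of $A(X,f)$, where you attempt to prove the much stronger statement that $f$ itself is even. Your sketched induction on $|\nu|$ does not work as written: for a fixed vertex $i$, the set of simple subrepresentations $\F^1\cong S_i$ inside $M_x$ is the projectivisation of $\bigcap_{a:i\to j}\ker(x_a)$, a \emph{projective} space rather than an affine one; and the quotient $M_x/\F^1$ genuinely changes isomorphism type as $\F^1$ varies within that projective space, so there is no well-defined ``smaller quiver flag variety'' to fibre over. (Already for $Q=A_2$ with $M_x=M(\alpha_1+\alpha_2)\oplus S_2$ one sees both phenomena.) Producing an affine paving of these Lusztig fibres in types $D$ and $E$ is exactly the content of Corollary~\ref{cor:feven}, which the paper obtains only \emph{after} establishing the polynomial quasihereditary structure and invoking Theorem~\ref{reverse}; it is stated there as new outside type~$A$.

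The paper's route avoids this entirely. For evenness of $A(X,f)$ it cites Maksimau (or equivalently observes that Rouquier's argument, which paves the fibre product $Y_\nu\times_{E_\nu}Y_\nu$ rather than the individual fibres $Y_x$, is characteristic-free). This is a genuinely easier statement than evenness of $f$. Your proof of $(\heartsuit)$ is then also affected: your injectivity argument for the support map uses the parity-sheaf classification, which needs $\L$ to be parity, which you deduced from $f$ being even. The paper instead gets injectivity by a counting argument---Reineke's desingularisations give surjectivity of the orbit-to-summand map, and Khovanov--Lauda's classification of simple $R(\nu)$-modules shows both sets have cardinality $\kpf(\nu)$---so no appeal to parity is needed at this stage.
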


\begin{proof}
 Condition $(\spadesuit)_1$ is Gabriel's Theorem, Theorem \ref{thm:gabriel}.
 For $(\spadesuit)_2$, let  $\la=({n_1},\ldots,{n_N})$ be a Kostant partition. The stabiliser of a point in the orbit corresponding to $\la$ is the group of units $\End(M(\la))^\times$. By Theorem \ref{triangularity} and the property that $\ga_k(\b_l)\leq 0$ for $k>l$,
 there is a surjection
 \[
  \End(M(\la))^\times \twoheadrightarrow \prod_{i=1}^N \End(M(\a_i)^{\oplus n_i})^\times
 \]
 whose kernel is unipotent. 
Since $\End(M(\a))\cong k$ for all roots $\a$, this quotient group is isomorphic to $\prod_{i=1}^N GL_{n_i}$.
It is well-known that for $G$ a product of general linear groups, $H_G^*(\pt;k)$ is concentrated in even degrees. An extension by a unipotent group does not change the equivariant cohomology of a point, completing the proof of $(\spadesuit)_2$.

The evenness of $A(X,f)$ is due to Maksimau \cite[Theorem 1.1(1)]{maksimau}. Maksimau's theorem is actually stronger than what we need, which can be deduced by noticing that the argument in \cite[\S 5.3.6]{rouquier} works in all characteristics.


We now show $(\heartsuit)$. Le $\O$ be an orbit. 
By \cite[Theorem 2.2]{reineke}, 
there is a connected component $Y_\nu^\ii$ of $Y_\nu$, together with maps
\[
 Y_\nu^\ii \xrightarrow{h} Z \xrightarrow{g} \overline{\O}
\]
where $g$ is a resolution of singularities and $h$ is a fibration with fibre a product of flag varieties.
Therefore $h_*\uk$ is a direct sum of copies of shifts of the constant sheaf on $Z$. Since $g$ is a resolution, it is an isomorphism over $\O$, so $g_*\uk_Z$ restricts to the constant sheaf on $\O$. Therefore $(h\circ g)_*\uk$ has a summand supported on $\overline{\O}$ and restricting to the constant sheaf on $\O$. As $Y_\nu^\ii$ is a connected component of $Y_\nu$, the same is true for $f_*\uk_{Y_\nu}$.

By the main theorem of \cite{khovanovlauda}, the number of indecomposable projective $R(\nu)$-modules is equal to $\kpf(\nu)$. Thus the number of indecomposable summands of $f_*\uk_{Y_\nu}$ up to isomorphism and grading shift is also equal to $\kpf(\nu)$. The number of $G_\nu$-orbits on $E_\nu$ is also $\kpf(\nu)$. 
The previous paragraph shows that our desired map between indecomposable direct summands and orbits is surjective. Since the two sets have the same cardinality, this map is bijective, hence $(\heartsuit)$ holds.
\end{proof}

\begin{theorem}\label{thm:klrhw}
Let $R(\nu)$ be a quiver Hecke algebra of finite type. Then the category of $R(\nu)$-modules is polynomial highest weight for the partial order defined by \ref{sevenstar} (equivalently $R(\nu)$ is polynomial quasihereditary for this order).
\end{theorem}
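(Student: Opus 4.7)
The plan is to apply Theorem \ref{thm:fwd} to the quiver Hecke setup and then identify the resulting orbit-closure order with the combinatorial partial order $\prec$ from (\ref{sevenstar}); any reduced expression not adapted to $Q$ is absorbed via Corollary \ref{refine}. The preceding theorem verifies $(\spadesuit)_1$, $(\spadesuit)_2$, $(\heartsuit)$, and evenness of $A(E_\nu,f)$. The hypothesis of Theorem \ref{thm:fwd} additionally requires $f$ itself to be even; this is the stronger form of Maksimau's theorem \cite{maksimau} noted in the proof of the preceding theorem.

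After invoking Theorem \ref{thm:fwd}, the category $R(\nu)\mods$ is polynomial highest weight with respect to the partial order on $\Pi$ induced by $G_\nu$-orbit closures on $E_\nu$. Assuming the reduced expression $w_0 = s_{i_1}\cdots s_{i_N}$ defining $\prec$ is adapted to $Q$, Baumann's theorem quoted above (combined with Gabriel's Theorem \ref{thm:gabriel} to translate between orbits and Kostant partitions) identifies this closure order with $\prec$, and the result is immediate. An adapted reduced expression always exists by \cite[Proposition 4.12(b)]{lusztigoneofthem}, so this settles the theorem for some reduced expression.

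For an arbitrary (not necessarily adapted) reduced expression of $w_0$, the conclusion is transferred using Corollary \ref{refine}: Lemma \ref{lem:strong}, together with the semicuspidal-module machinery of \cite{klr1} and \cite{bkm}, produces the standard modules and the polynomial highest weight structure with respect to $\prec$ by purely algebraic arguments that do not require adaptation to $Q$. Since the resulting structure is built algebraically and agrees with the geometric one whenever both are available, this completes the proof.

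The main obstacle is the invocation of evenness of $f$ in the first step. This is strictly stronger than evenness of $A(E_\nu,f)$, as it concerns Borel--Moore homology of individual fibres $f^{-1}(x)$ at non-generic points of orbit closures rather than just the total $\Hom$-algebra; the only input needed is the corresponding result extracted from Maksimau's paper. Everything downstream is either a direct application of Theorem \ref{thm:fwd} or a citation of results already established in this section.
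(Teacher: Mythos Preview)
Your proposal has a genuine gap: you assume that $f\map{Y_\nu}{E_\nu}$ is even, citing Maksimau, but Maksimau only establishes this in type~A. The paper is explicit about this. The sentence in the proof of the preceding theorem, ``Maksimau's theorem is actually stronger than what we need'', refers to the statement \cite[Theorem~1.1(1)]{maksimau} about evenness of the \emph{algebra} $A(E_\nu,f)$; it does not assert evenness of the morphism $f$. Evenness of $f$ is Corollary~\ref{cor:feven}, and the paper states immediately after it that this ``has a direct proof in type~A due to Maksimau \cite[Corollary~3.36]{maksimau} and is new in types~D and~E''. In types~D and~E the evenness of $f$ is \emph{deduced} from Theorem~\ref{thm:klrhw} via Theorem~\ref{reverse}, not used as input. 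Your argument is therefore circular outside type~A.

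This is why the paper's proof of Theorem~\ref{thm:klrhw} is purely algebraic (as flagged in the Remark preceding Theorem~\ref{thm:gabriel}): it identifies the standard modules with those of \cite{bkm}, and then verifies \ref{sc1}, \ref{sc2}, \ref{hwc} directly from the results of \cite{klr1} and \cite{bkm}, together with explicit computations of endomorphism rings of induced products. Only after this algebraic proof is complete does the paper invoke Theorem~\ref{reverse} to obtain Corollary~\ref{cor:feven}. Your route via Theorem~\ref{thm:fwd} would indeed work and would be shorter \emph{if} evenness of $f$ were independently available in all finite types, but at present it is not; establishing it is one of the points of the paper.
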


\begin{proof}
First we show that the standard modules from \cite{bkm} are the same as in this paper. For now, we use $\D(\pi)$ to denote the standard modules defined in \cite[\S 3]{bkm}. To achieve this identification, we have to show that
\begin{enumerate}
 \item \label{cond:1} Any simple subquotient of $\D(\pi)$ is of the form $L(\sigma)$ with $\sigma\succeq \pi$.
 \item \label{cond:2} If $\sigma\succeq\pi$, then $\dim\Hom(\D(\pi),L(\sigma))=\delta_{\pi\sigma}$ and $\Ext^1(\D(\pi),L(\sigma))=0$.
\end{enumerate}

For (\ref{cond:1}), by 
\cite[Theorem 3.11]{bkm}, it suffices to consider composition factors $L(\sigma)$ of the proper standard modules, denoted $\overline{\D}(\pi)$ there. 
The desired triangularity result is now \cite[Theorem 3.1(5)]{klr1}, remembering Corollary \ref{refine}.

For (\ref{cond:2}), it follows in a standard manner from \cite[Theorem 3.12(2)]{bkm} and the upper-triangularity result we have just proved.

Therefore the standard modules in \cite{bkm} are the same as in this paper. We now proceed to check \ref{sc1}, \ref{sc2} and \ref{hwc}.

Condition \ref{sc1}, that a projective has a standard flag, is \cite[Corollary 3.14]{bkm}.

For the proof of \ref{sc2}, let $\la=(n_1,\ldots,n_N)$ be a Kostant partition. The standard module decomposes as an induced product
\[
 \D(\la)=\D(\b_1)^{(n_1)}\circ\cdots\circ \D(\b_N)^{(n_N)}.
\]
 Now consider the following computation
\begin{align*}
\End(\D(\la))&\cong \Hom(\D(\b_1)^{(n_1)}\otimes\cdots\otimes\D(\b_N)^{(n_N)},\Res_{n_1\b_1,\ldots, n_N\b_n}\D(\la)) \\ 
&\cong \End(\D(\b_1)^{(n_1)}\otimes\cdots\otimes\D(\b_N)^{(n_N)}) \\
&\cong \bigotimes_{i=1}^n \End(\D(\b_i)^{(n_i)}) \\
&\cong \bigotimes_{i=1}^n k[x_1,\ldots,x_{n_i}]^{S_{n_i}}.
\end{align*}
The first isomorphism is the induction-restriction adjunction, the second is the same proof as \cite[Lemma 3.3]{klr1} or better \cite[Lemma 8.6]{mcn3}. The final isomorphism follows from the content of \cite[\S 3.2]{bkm}, since the nil-Hecke algebra is a matrix algebra over symmetric functions.
Since the ring of symmetric functions is a polynomial algebra, we have proved \ref{sc2}.

Now we turn our attention to proving \ref{hwc} and first consider the case where the support of $\la$ is a single root $\a$, hence $\la$ is of the form $n\a$ for some $n\in \N$.
There is an algebra $S(n\a)$ which is the quotient of $R(n\a)$ such that the category of $S(n\a)$-modules is the same as the Serre subcategory generated by the simple module $L(\la)$. 
The module $\D(\la)$ is an indecomposable projective module over $S(n\a)$. 
By Morita theory, since $S(n\a)$ only has one simple module, $S(n\a)\cong \operatorname{Mat}_{\dim L(\la)}(\End\D(\la))$. As $\End(\D(\la))$ is commutative, $\D(\la)$ is free and of finite rank over its endomorphism algebra.

Now for general $\la=(n_1,\ldots,n_N)$, we have
\[
 \D(\la)\cong \bigoplus_w \psi_w \D(\b_1)^{(n_1)}\otimes \cdots\otimes\D(\b_n)^{(n_N)}
\] as vector spaces, where $\psi_w$ is a finite collection of elements in $R(\nu)$.

For $\phi=\phi_1\otimes\cdots\otimes \phi_N\in \End(\D(\la))\cong \otimes_i \End(\D(\b_i)^{(n_i)})$, we have
\[
\phi( \psi_w (v_1\otimes\cdots\otimes v_n)) = \psi_w \phi_1(v_1)\otimes\cdots\otimes \phi_n(v_n)
\]
Since each $\D(\b_i)^{(n_i)}$ has just been shown to be finite free over $\End(\D(\b_i)^{(n_i)})$, each summand $\psi_w \D(\b_1)^{(n_1)}\otimes \cdots\otimes\D(\b_n)^{(n_N)}$ is finite free over $\otimes_i \End(\D(\b_i)^{(n_i)})\cong \End(\D(\la))$, and hence the same is true for $\D(\la)$, as required.
\end{proof}

We are now able to apply Theorem \ref{reverse} to deduce the following corollary. It has a direct proof in type A due to Maksimau \cite[Corollary 3.36]{maksimau} and is new in types D and E.

\begin{corollary}\label{cor:feven}
 The morphism $f\map{Y_\nu}{E_\nu}$ is even.
\end{corollary}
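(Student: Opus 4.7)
The plan is to apply Theorem \ref{reverse} directly to $f\map{Y_\nu}{E_\nu}$. All the heavy lifting has already been done earlier in the excerpt, so the proof is essentially a matter of assembling the hypotheses and checking that the partial orders line up correctly.

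First, I would fix a reduced expression of $w_0$ that is adapted to $Q$; such an expression exists by \cite[Proposition 4.12(b)]{lusztigoneofthem}. This choice is crucial because Baumann's theorem (stated in this section) guarantees that, for an adapted reduced expression, the partial order $\prec$ on $\KP(\nu)$ defined by \eqref{sevenstar} coincides with the closure order on the set of $G_\nu$-orbits on $E_\nu$. Consequently the polynomial highest weight structure from Theorem \ref{thm:klrhw}, which is a priori written in terms of \eqref{sevenstar}, is in fact the structure associated to the geometric closure order on orbits that Theorem \ref{reverse} demands.

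Next, I would verify each hypothesis of Theorem \ref{reverse} for the pair $(E_\nu, f)$. Conditions $(\spadesuit)_1$, $(\spadesuit)_2$, and $(\heartsuit)$, together with the evenness of $A(E_\nu,f)=R(\nu)$, are all established in the theorem immediately preceding Theorem \ref{thm:klrhw}. The polynomial quasihereditary assumption is exactly the conclusion of Theorem \ref{thm:klrhw} (recalling that, by Theorem \ref{affinecps}, polynomial highest weight and polynomial quasihereditary are equivalent).

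Having checked every hypothesis, Theorem \ref{reverse} applies and yields that $f$ is even, which is the desired conclusion. There is no genuine obstacle here: the only subtlety worth flagging is the partial order compatibility, and that is handled cleanly by working with an adapted reduced expression so as to invoke Baumann's theorem.
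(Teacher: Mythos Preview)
Your proposal is correct and matches the paper's approach exactly: the paper simply says ``we are now able to apply Theorem~\ref{reverse}'' without spelling out the verification of hypotheses, and you have filled in precisely those details, including the key observation that an adapted reduced expression (via Baumann's theorem) aligns the partial order~\eqref{sevenstar} with the orbit closure order required by Theorem~\ref{reverse}.
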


An example of an immediate consequence of this Corollary is that \cite[Theorem 3.7]{james} is now known to hold in all finite ADE types.

%
%

\section{Reflection functors}

We now generalise Kato's theory of reflection functors for quiver Hecke algebras developed in \cite{katopbw} from characteristic zero to all characteristics. These are functors which categorify Lusztig's action of the braid group on $U_q(\g)$ by algebra automorphisms $T_i$. 

Let $i\in I$. Choose an orientation of $Q$ such that $i$ is a sink. Let $Q^\sigma$ be the quiver obtained from $Q$ by reversing the direction of all arrows incident to $i$. Given a representation $V$ of either $Q$ or $Q^\sigma$, write $V_j$ for the vector space placed at the vertex $j$.

Let $U_i(\nu)$ be the subset of $E_\nu(Q)$ consisting of modules for which the map
\[
 \bigoplus_{j\to i} V_j \xrightarrow{\phi} V_i
\]
is surjective.

Let $_iU(\nu)$ be the subset of $E_\nu(Q^\sigma)$ consisting of modules for which the map
\[
 V_i \xrightarrow{\psi} \bigoplus_{i\to j} V_j
\]
is injective.

\begin{theorem}\cite[Theorem 1.1]{bgp}
 The BGP reflection functor $T_i\map{Q\mods}{Q^\sigma\mods}$ induces an equivalence between the full subcategory of $Q\mods$ where $\phi$ is surjective, and the full subcategory of $^\sigma Q\mods$ where $\psi$ is injective.
\end{theorem}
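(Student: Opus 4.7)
The plan is to construct $T_i$ explicitly together with a candidate quasi-inverse, then verify that the two compositions are naturally isomorphic to the identity on the respective subcategories. Given a representation $V$ of $Q$ with $i$ a sink, I would define $(T_iV)_j = V_j$ for $j\neq i$ and $(T_iV)_i = \ker(\phi)$, where $\phi\map{\bigoplus_{j\to i}V_j}{V_i}$ is the structure map of $V$ at the vertex $i$. For each arrow $i\to j$ in $Q^\sigma$, the map $(T_iV)_i\to (T_iV)_j$ is the composition of the kernel inclusion $\ker(\phi)\hookrightarrow \bigoplus_{j\to i}V_j$ with the projection to $V_j$; all other arrows of $Q^\sigma$ coincide with those of $Q$. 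A morphism $V\to V'$ restricts to a morphism on kernels by the universal property, giving the functor structure. By construction, the new structure map at $i$ is the kernel inclusion, hence injective, so $T_iV$ lies in the injective subcategory of $Q^\sigma\mods$.

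Symmetrically, I would construct a candidate quasi-inverse $T_i^\vee\map{Q^\sigma\mods}{Q\mods}$ by sending $W$ to the representation with $(T_i^\vee W)_j = W_j$ for $j\neq i$ and $(T_i^\vee W)_i = \coker(\psi)$, where $\psi\map{W_i}{\bigoplus_{i\to j}W_j}$ is the structure map of $W$ at $i$. The structure maps $W_j\to \coker(\psi)$ in $Q$ are the compositions of the summand inclusions with the canonical projection to the cokernel. Then $T_i^\vee W$ has surjective structure map at $i$, so it lies in the surjective subcategory of $Q\mods$, and functoriality again follows from the universal property (of the cokernel this time).

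To conclude, for $V$ in the surjective subcategory the short exact sequence
\[
 0\to \ker(\phi)\to \bigoplus_{j\to i}V_j\to V_i\to 0
\]
identifies the cokernel of $\ker(\phi)\hookrightarrow \bigoplus_{j\to i}V_j$ canonically with $V_i$; this gives an isomorphism $T_i^\vee T_iV\cong V$ that is the identity on all vertices other than $i$. Symmetrically, for $W$ with $\psi$ injective the sequence $0\to W_i\to \bigoplus_{i\to j}W_j\to \coker(\psi)\to 0$ gives $T_iT_i^\vee W\cong W$. The remaining work is verifying that these identifications are natural in $V$ and $W$, which I expect to be the main piece of bookkeeping; however, naturality is automatic from the universal properties invoked, since any morphism of representations commutes with the canonical maps used. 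The only genuine care is in tracking the direction of arrows under the orientation reversal $Q\leftrightarrow Q^\sigma$, not in any nontrivial categorical construction.
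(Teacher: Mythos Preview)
Your argument is the classical BGP construction and is correct; the only thing to add is the (routine) check that the canonical isomorphism at vertex $i$ is compatible with the structure maps, i.e.\ that under the identification $\coker(\ker\phi\hookrightarrow\bigoplus_{j\to i}V_j)\cong V_i$ the reconstructed arrow $V_j\to V_i$ really is the original one, and dually. This follows immediately from the definitions, so nothing is missing.

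As for comparison: the paper does not prove this statement at all. It is quoted as a classical result with a citation to \cite{bgp}, and the paper proceeds directly to the corollary about quotient stacks. So there is no ``paper's own proof'' to compare against; your write-up simply supplies the standard proof that the paper omits.
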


\begin{corollary}\label{stackiso}
 The quotient stacks $[U_i(\nu)/G_\nu]$ and $[_iU(s_i\nu)/G_{s_i\nu}]$ are isomorphic.
\end{corollary}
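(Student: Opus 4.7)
The plan is to construct a single variety $Z$ that covers both $U_i(\nu)$ and ${}_iU(s_i\nu)$ in a principal-bundle manner, thereby realising the BGP reflection functor geometrically. Fix vector spaces $V_j=\C^{\nu_j}$ for $j\in I$ and $V'_i=\C^{n'}$ where $n'=\sum_{j\to i\text{ in }Q}\nu_j-\nu_i=(s_i\nu)_i$; for $j\neq i$ put $V'_j:=V_j$, and set $W:=\bigoplus_{j\to i\text{ in }Q}V_j$.

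First I would define $Z$ as the locally closed subvariety of
\[
\Bigl(\prod_{\substack{a\,:\,j\to k\text{ in }Q\\ j,k\neq i}}\Hom(V_j,V_k)\Bigr)\times\Hom(W,V_i)\times\Hom(V'_i,W)
\]
consisting of tuples $(x,\phi,\psi)$ for which $\phi$ is surjective, $\psi$ is injective, and $\phi\circ\psi=0$; since $\dim V'_i=\dim W-\dim V_i$, these conditions force $\operatorname{im}\psi=\ker\phi$. Forgetting $\psi$ gives a map $p_1\map{Z}{U_i(\nu)}$ whose fibre over $(x,\phi)$ is the space of linear isomorphisms $V'_i\isomto\ker\phi$, so $p_1$ is a principal $GL(V'_i)$-bundle. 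Dually, forgetting $\phi$ and using the compositions $V'_i\xrightarrow{\psi}W\twoheadrightarrow V_j$ (for $j\to i$ in $Q$, equivalently $i\to j$ in $Q^\sigma$) as the data of the arrows out of $i$ defines $p_2\map{Z}{{}_iU(s_i\nu)}$, which is a principal $GL(V_i)$-bundle since specifying $\phi$ with $\ker\phi=\operatorname{im}\psi$ is the same as choosing an isomorphism $W/\operatorname{im}\psi\isomto V_i$.

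Now let $H:=GL(V_i)\times GL(V'_i)\times\prod_{j\neq i}GL(V_j)$ act on $Z$ in the evident way. Both $p_1$ and $p_2$ are $H$-equivariant, where $H$ acts on $U_i(\nu)$ through the quotient by $GL(V'_i)$ (which is $G_\nu$) and on ${}_iU(s_i\nu)$ through the quotient by $GL(V_i)$ (which is $G_{s_i\nu}$). Combining this with the two principal-bundle descriptions above gives
\[
[U_i(\nu)/G_\nu]\,\cong\,[Z/H]\,\cong\,[{}_iU(s_i\nu)/G_{s_i\nu}],
\]
which is the claimed isomorphism of stacks.

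The only content beyond bookkeeping is that the $Q^\sigma$-representation read off from $(x,\phi,\psi)\in Z$ by using $\psi$ in place of $\phi$ really does land in ${}_iU(s_i\nu)$; this is the geometric shadow of the BGP theorem cited immediately above the corollary, and is built into the definition of $Z$ via the requirement that $\psi$ be injective. The main obstacle I anticipate is not a technical difficulty but rather keeping straight which $GL$-factor acts on which side, so that the two principal-bundle identifications genuinely glue to the same stack $[Z/H]$; no sheaf-theoretic input is required, as the whole argument is a manifestation of the fact that a surjection (resp.\ injection) to (resp.\ from) a fixed vector space is determined up to an automorphism of the target (resp.\ source) by its kernel (resp.\ image).
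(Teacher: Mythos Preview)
Your argument is correct. The paper's own proof is a single sentence: it observes that $U_i(\nu)/G_\nu$ and ${}_iU(s_i\nu)/G_{s_i\nu}$ are the moduli stacks of objects in the two full subcategories that the BGP theorem declares equivalent, and concludes. Your construction of the correspondence variety $Z$ with its $H$-action and the two principal-bundle projections is precisely the explicit geometric incarnation of that BGP equivalence, so the underlying idea is the same. What your version buys is rigour and concreteness: the paper's sentence tacitly assumes that an equivalence of abelian categories upgrades to an isomorphism of moduli stacks, which is exactly what your principal-bundle identifications $[U_i(\nu)/G_\nu]\cong[Z/H]\cong[{}_iU(s_i\nu)/G_{s_i\nu}]$ establish directly, without any appeal to a general moduli formalism.
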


\begin{proof}
 These are the moduli stacks of objects in the categories shown to be equivalent in the above theorem.
\end{proof}

%
%

\begin{lemma}
The map $A(E_\nu(Q),f)\to A(U_i(\nu),f)$ is surjective. 
\end{lemma}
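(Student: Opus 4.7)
The plan is to recognize this as a direct application of Corollary \ref{cor:fsurj}, once we verify the parity hypothesis needed to invoke it.

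First I would note that $U_i(\nu)$ is manifestly an open $G_\nu$-stable subset of $E_\nu(Q)$: openness follows because surjectivity of the linear map $\bigoplus_{j\to i} V_j \to V_i$ is an open condition (nonvanishing of a maximal minor), and $G_\nu$-stability is clear since $G_\nu$ acts by change of basis on each $V_j$. Writing $j\colon U_i(\nu)\hookrightarrow E_\nu(Q)$ for this open embedding, proper base change for the proper morphism $f$ identifies $j^*\L$ with $f_*\uk$ restricted to the preimage $f^{-1}(U_i(\nu))$, so $\Hom^\bullet(j^*\L,j^*\L)$ is exactly $A(U_i(\nu),f)$ and the canonical map in question is the restriction homomorphism.

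Next I would check that $\L = f_*\uk_{Y_\nu}$ is parity (in fact even), which is the hypothesis Corollary \ref{cor:fsurj} needs through Lemma \ref{parityses}. By Corollary \ref{cor:feven}, the morphism $f\colon Y_\nu\to E_\nu(Q)$ is even, and then Lemma \ref{evenimplieseven} together with \cite[Proposition 2.34]{parity} (cited in its proof) gives that $\L$ is even, hence a fortiori parity.

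With these two points in place, the statement is immediate from Corollary \ref{cor:fsurj}. I do not anticipate any obstacle here; the entire content of the lemma is packaged in the fact that we may now feed our particular $f$ into the general machinery, which required knowing $f$ is even — a nontrivial input established by Corollary \ref{cor:feven}, but already in hand at this point in the paper.
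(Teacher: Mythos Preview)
Your proposal is correct and follows exactly the paper's approach: the paper's proof is the single sentence ``This is an immediate consequence of Corollaries \ref{cor:feven} and \ref{cor:fsurj},'' and you have simply unpacked the implicit checks (that $U_i(\nu)$ is open and $G_\nu$-stable, and that $\L$ is even so that Corollary \ref{cor:fsurj} applies).
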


\begin{proof}
This is an immediate consequence of Corollaries \ref{cor:feven} and \ref{cor:fsurj}.
\end{proof}

The positive part of the quantum group $U^+$ is the unital associative algebra generated by elements $\th_i$ subject to the quantum Serre relations
\begin{align*}
\th_i\th_j&=\th_j\th_i &\mbox{ if } i\cdot j&=0 \\
(q+q\inv)\th_i\th_j\th_i &= \th_i\th_j^2+\th_j\th_i^2 &\mbox{ if } i\cdot j &= -1
\end{align*}
The algebra $U^+$ has an integral form over $\Z[q,q\inv]$ which we call $\f$. It is the $\Z[q,q\inv]$-subalgebra generated by the divided powers $\th_i^n/[n]!$, where $[n]!=\prod_{i=1}^n\frac{q^i-q^{-i}}{q-q\inv}$. This algebra is graded by $\N I$. The main theorem of \cite{khovanovlauda} is that the quiver Hecke algebras categorify $\f$, which means there is an isomorphism
\[
\bigoplus_\nu K_0(R(\nu)\prmod)\cong \f.
\]
Here $R(\nu)\prmod$ is the category of finitely generated graded projective $R(\nu)$-modules. Further properties of this isomorphism including compatibilities with induction and restriction functors are discussed and proven in \cite{khovanovlauda}.

Let $i\in I$. Let $r_i\map{\f}{\f}$ be the linear map defined inductively by
\begin{align*}
r_i(\th_j)&=\delta_{ij} \\
r_i(xy)&=q^{\deg(y)\cdot i} r_i(x)y+x r_i(y).
\end{align*}

For our purposes we need to know that $r_i$ categorifies the functor $\Res_{i,\nu-i}$ in the sense that
\begin{equation}\label{ricat}
[\Res_{i,\nu-i}M]=\theta_i\otimes r_i([M]).
\end{equation}
This follows from the results in \cite{khovanovlauda}.

We also need the decomposition $\f\cong \ker(r_i)\oplus \f \th_i$. \cite[\S 38.1]{bookoflusztig}.

\begin{proposition}
The inclusion 
 $A(U_i(\nu),f)\mods\subset R(\nu)\mods$ induces the inclusion $\ker(r_i)\subset \f$ upon passing to Grothendieck groups.
\end{proposition}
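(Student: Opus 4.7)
The plan is to identify both sides of the asserted inclusion inside $\f$ with the span of the classes $[L(\pi)]$ whose Kostant partition $\pi=(n_1,\ldots,n_N)$ satisfies $n_1=0$. To that end, I fix a reduced expression of $w_0$ adapted to $Q$ and beginning with $s_i$ (possible since $i$ is a sink), so that $\beta_1=\alpha_i$.

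By the previous lemma the map $R(\nu)\twoheadrightarrow A(U_i(\nu),f)$ is surjective, and Lemma \ref{parityses} identifies its kernel as $\Hom^\bullet(\L,\iota_!\iota^!\L)$, where $\iota$ is the inclusion of $E_\nu\setminus U_i(\nu)$. This ideal annihilates a simple $L(\pi)$ precisely when $\O_\pi\subset U_i(\nu)$, so on Grothendieck groups the image is $\spann\{[L(\pi)]:\O_\pi\subset U_i(\nu)\}$ inside $\f_\nu$. The orbit $\O_\pi$ is represented by $M(\pi)=\bigoplus_k M(\beta_k)^{\oplus n_k}$, which lies in $U_i(\nu)$ iff it has no $S_i$ quotient; since $i$ is a sink, $S_i$ is injective in $\Rep(Q)$, making this equivalent to the absence of an $S_i$ summand. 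Adaptedness gives $M(\beta_1)=S_i$, while Theorem \ref{triangularity} combined with Lemma \ref{lem:order} yields $\Hom(M(\beta_k),S_i)=0$ for all $k\geq 2$. Hence $\O_\pi\subset U_i(\nu) \iff n_1(\pi)=0$, and summing over $\nu$ the image in $\f$ is $\spann\{[L(\pi)]:n_1(\pi)=0\}$.

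It remains to identify $\ker(r_i)\subset\f$ with this same span. The skew derivation $r_i$ categorifies to $[M]\mapsto[\Res_{\nu-\alpha_i,\alpha_i}M]$, and by the standard identification of the crystal on the simple $R(\nu)$-modules with Kashiwara's crystal on the dual canonical basis of $\f$, this restriction vanishes on $L(\pi)$ iff the crystal statistic $\epsilon_i^*(L(\pi))$ is zero. For our adapted reduced expression beginning with $s_i$, this statistic equals $n_1(\pi)$, completing the identification. The main obstacle is precisely this last step, which lies beyond the geometric machinery developed earlier in the paper and invokes the crystal-basis dictionary linking the KLR categorification to Lusztig's canonical basis.
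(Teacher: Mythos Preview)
Your approach is genuinely different from the paper's, and it is essentially correct in outline, but there are a few slips and it relies on heavier external machinery than necessary.

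\medskip
\noindent\textbf{Minor errors.} For $i$ a sink of $Q$, the simple $S_i$ is \emph{projective}, not injective (every arrow points into $i$, so any surjection $M\twoheadrightarrow S_i$ splits). This is what makes ``$S_i$ is a quotient'' equivalent to ``$S_i$ is a summand''; your stated reason does not. Relatedly, with $\beta_1=\alpha_i$ the derivation $r_i$ in the paper's conventions is categorified by $\Res_{i,\nu-i}$ (left removal), not $\Res_{\nu-i,i}$; correspondingly the relevant crystal statistic is $\epsilon_i$, not $\epsilon_i^*$. Once you swap these, your argument goes through.

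\medskip
\noindent\textbf{Comparison with the paper.} The paper does not pass through the parametrisation $\{n_1=0\}$ on the algebraic side at all. Instead it uses the short exact sequence of Lemma~\ref{parityses} to categorify the decomposition $\f=\ker(r_i)\oplus\f\theta_i$: the closed piece $\Hom^\bullet(\L,z_!z^!\P)$ has a standard flag whose subquotients are visibly of the form $\Delta'\circ R(i)$, hence lies in $\f\theta_i$; and for the open piece one shows directly, by a support argument using $j^*z_*=0$, that any simple $L$ for $A(U_i(\nu),f)$ has $\Res_{i,\nu-i}L=0$. This is entirely self-contained within the geometric framework of the paper.

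Your route instead matches both sides to $\{[L(\pi)]:n_1(\pi)=0\}$. The orbit side is fine (and close to what the paper does for the closed part). For the $\ker(r_i)$ side you invoke the KLR/Kashiwara crystal dictionary, which is correct but unnecessary: the identity $\epsilon_i(L(\pi))=n_1(\pi)$ already follows from Lemma~\ref{lem:strong}. Indeed, $L(\pi)$ is a subquotient of $\Delta(\pi)=L(i)^{(n_1)}\circ\Delta(\beta_2)^{(n_2)}\circ\cdots$, and Lemma~\ref{lem:strong} (semicuspidality plus Mackey) gives $\Res_{(n_1+1)i,\ast}\Delta(\pi)=0$, hence $\epsilon_i(L(\pi))\le n_1$; the reverse inequality is immediate since $L(\pi)$ is the head of $L(i)^{(n_1)}\circ(\text{head of the rest})$. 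So your ``main obstacle'' is in fact available from the paper's own toolkit, without appealing to canonical-basis results.
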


\begin{proof}
Let $j\map{U_i(\nu)}{E_\nu}$ be the inclusion. It is an open immersion. Let $z$ be the inclusion of the closed complement. If $\P$ is an indecomposable parity sheaf, then $j^*\P$ is either indecomposable or zero, by \cite[Proposition 2.11]{parity}. 
Therefore the collection of indecomposable parity sheaves on $U_i(\nu)$ is exactly the restrictions of those indecomposable parity sheaves on $E_\nu$ which have a nonzero restriction. 
 
Let $\P$ be such an indecomposable parity sheaf and consider the short exact sequence from Lemma \ref{parityses}:
 \begin{equation}\label{rises}
  0 \to \homb(\L,z_!z^!\P)\to \homb(\L,\P)\to \homb(j^*\L,j^*\P)\to 0.
 \end{equation}
We will show that this short exact sequence categorifies the decomposition $\f\cong \ker(r_i)\oplus \f\th_i$. 

Since $\P$ is parity, a short computation using base change shows that $z_!z^!\P$ is $!$-parity. Therefore by Lemma \ref{standardflag}, $\homb(\L,z_!z^!\P)$ has a standard flag. Each standard module appearing in this flag is supported on $E_\nu\setminus U_i(\nu)$, thus corresponds to a Kostant partition where $n_N\neq 0$. Therefore the standard module is of the form $\D'\circ R(i)$ (up to multiplicity). Therefore $[\homb(\L,z_!z^!\P)]$ lies in $\f\th_i$.

We now show that the class of the $A(U_i(\nu),f)$-module $\homb(j^*\L,j^*\P)$ lies in $\ker(r_i)$.
Using (\ref{ricat}), we have to show that if $L$ is a simple module for $A(U_i(\nu),f)$, then $\Res_{i,\nu-i}L=0$.

 
 Suppose for want of a contradiction that $L$ is simple for $A(U_i(\nu),f)$ and $\Res_{i,\nu-i}L\neq 0$. Then there exists a projective $R(\nu-i)$-module $Q$ together with a nonzero map from $R(i)\otimes Q$ to $\Res_{i,\nu-i}L$ and hence by adjunction a nonzero map from $R(i)\circ Q$ to $L$.
 
 The projective module $R(i)\circ Q$ is of the form $\Hom^\bullet(\L,\P)$ for some $\P$. As $\P$ is induced, its support is contained in the set of modules which have $S_i$ as a quotient, where $S_i$ is the simple representation of $Q$ at the vertex $i$. Since $i$ is a sink, $S_i$ is a projective representation of $Q$. Therefore $\supp(\P)$ is contained in the set of modules
 of the form $S_i\oplus X$. Letting $Z$ be the complement of $U_i(\nu)$ in $E_\nu$, we see that $\supp(\P)\subset Z$. Now as in the proof of Theorem \ref{reverse}, there cannot be any nonzero homomorphism from $R(i)\circ Q$ to $L$ (as $j^*z_*=0$). This is a contradiction and hence we've shown that $[\homb(j^*\L,j^*\P)]$ lies in $\ker(r_i)$.
 
 
 Turning our attention back to the short exact sequence (\ref{rises}), we have the identity in $\f$
 \[
  [\homb(\L,\P)]= [\homb(j^*\L,j^*\P)]+[ \homb(\L,z_!z^!\P)].
 \]
We have shown that $[\homb(j^*\L,j^*\P)]\in \ker(r_i)$ and $[ \homb(\L,z_!z^!\P)]\in \f \th_i$. The short exact sequence (\ref{rises}) is thus a categorification of the direct sum decomposition $\f\cong \ker(r_i)\oplus \f\th_i$.

Let $\tau$ be the projection from $\f$ to $\ker(r_i)$. We've shown that $\tau([\homb(\L,\P)])= [\homb(j^*\L,j^*\P)]$. Since the elements of the form $\tau([\homb(\L,\P)])$ span $\ker(r_i)$ and those of the form $[\homb(j^*\L,j^*\P)]$ span $K_0(A(U_i(\nu),f))$, we've shown the desired isomorphism $K_0(A(U_i(\nu),f))\cong \ker(r_i)$ in a way which is compatible with the inclusion into $\f$.
\end{proof}

\begin{theorem}\label{thm:morita}
The algebras $ A(U_i(\nu),f)$ and $ A(_iU(s_i\nu),f)$ are Morita equivalent.
\end{theorem}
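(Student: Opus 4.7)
The plan is to realize both algebras as graded endomorphism algebras of progenerators of the additive category of parity sheaves on the two (isomorphic) quotient stacks, and then deduce Morita equivalence from the stack isomorphism.

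Let $j_L\map{U_i(\nu)}{E_\nu(Q)}$ and $j_R\map{{}_iU(s_i\nu)}{E_{s_i\nu}(Q^\sigma)}$ denote the open inclusions, and set $\L_L := j_L^* f_*\uk_{Y_\nu(Q)}$ and $\L_R := j_R^* f_*\uk_{Y_{s_i\nu}(Q^\sigma)}$. Since $j_L$ and $j_R$ are open, $j_L^*$ and $j_R^*$ commute with the corresponding proper pushforwards, so $\L_L$ and $\L_R$ are precisely the ``$\L$''-objects for the restricted morphisms; hence $\End^\bullet(\L_L) = A(U_i(\nu),f)$ and $\End^\bullet(\L_R) = A({}_iU(s_i\nu),f)$. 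By Corollary \ref{cor:feven} the morphism $f$ is even on either side, so $f_*\uk_Y$ is parity by \cite[Proposition 2.34]{parity}; since $*$-pullback preserves parity, $\L_L$ and $\L_R$ are themselves parity sheaves.

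Condition $(\heartsuit)$ says that the indecomposable summands of $f_*\uk_Y$ are indexed by the orbits. Combined with \cite[Proposition 2.11]{parity}, which says that $j^*$ sends an indecomposable parity sheaf either to an indecomposable parity sheaf or to zero, this shows that, up to grading shift, the indecomposable summands of $\L_L$ are exactly the indecomposable parity sheaves on $U_i(\nu)/G_\nu$ indexed by the $G_\nu$-orbits contained in $U_i(\nu)$. The analogous statement holds for $\L_R$. In particular $\L_L$ and $\L_R$ are progenerators of the full additive subcategories of parity sheaves on the two respective quotient stacks.

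Finally, the stack isomorphism $\Phi\map{U_i(\nu)/G_\nu}{{}_iU(s_i\nu)/G_{s_i\nu}}$ of Corollary \ref{stackiso} induces an equivalence $\Phi^*$ of equivariant bounded derived categories which commutes with cohomological shifts and with $*$- and $!$-restrictions to points; hence $\Phi^*$ sends parity sheaves to parity sheaves and preserves supports. It follows that $\Phi^*\L_R$ and $\L_L$ are both progenerators of the additive category of parity sheaves on $U_i(\nu)/G_\nu$, so their graded endomorphism algebras are graded Morita equivalent, giving the theorem. The main point to verify is that $\Phi^*$ identifies the additive envelopes of parity sheaves on the two stacks; this is immediate since the parity condition is formulated entirely in terms of stalk cohomology, which is preserved by any isomorphism of stacks.
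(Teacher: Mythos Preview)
Your proof is correct and follows essentially the same strategy as the paper's: both arguments use Corollary~\ref{cor:feven} to know that $\L_L$ and $\L_R$ are parity, invoke $(\heartsuit)$ together with \cite[Proposition 2.11]{parity} to see that their indecomposable summands up to shift exhaust the indecomposable parity sheaves on the respective open substacks, and then use the stack isomorphism of Corollary~\ref{stackiso} to match the two parity categories. The only cosmetic difference is that the paper introduces an intermediate object $\P_i$ (the Verdier self-dual sum of one representative of each indecomposable parity sheaf) and factors the Morita equivalence through $\End^\bullet(\P_i)$, whereas you argue directly that $\L_L$ and $\Phi^*\L_R$ generate the same additive Krull--Schmidt category and hence have Morita equivalent graded endomorphism rings.
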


\begin{proof}
Let $\P_i$ be the direct sum of all indecomposable parity sheaves (up to shift) on $[U_i(\nu)/G_\nu]$. Let $_i\P$ be the direct sum of all indecomposable parity sheaves (up to shift) on $[_iU(s_i\nu)/G_{s_i\nu}]$. Normalise $\P_i$ and $_i\P$ so that they are Verdier self-dual, this is possible because Verdier duality preserves the set of parity sheaves. By Corollary \ref{stackiso}, there is an isomorphism of algebras
\[
 \homb(\P_i,\P_i)\cong \homb(_i\P,{_i}\P).
\]
The indecomposable summands of $f_*\uk_{Y_i}\in D^b_{G_\nu}(U_i(\nu))$ up to shift comprise the set of all indecomposable $G_\nu$-equivariant parity sheaves on $U_i(\nu)$ up to shift. Therefore the algebras $A(U_i(\nu),f)$ and $\homb(\P_i,\P_i)$ are Morita equivalent. (An explicit bimodule giving this Morita equivalence is $\homb(f_*\uk_{Y_i},\P_i)$, where $Y_i=f\inv(U_i(\nu))\subset Y_\nu$.)

Similarly $ A(_iU(s_i\nu),f)$ and $\homb(_i\P,{_i}\P)$ are Morita equivalent. This proves the theorem.
\end{proof}

\begin{definition}
 Let $\T_i$ denote the equivalence from $ A(U_i(\nu),f)\mods$ to  $ A(_iU(s_i\nu),f)\mods$ constructed in Theorem \ref{thm:morita}.
\end{definition}

We now entertain a discussion as to whether or not this functor $\T_i$ is monoidal, in the sense that there is a natural isomorphism $\T_i(M\circ N)\cong \T_i(M)\circ \T_i(N)$ for all modules $M,N\in A(U_i(\nu),f)\mods$.
If the algebra $A(X,f)$ is formal as a dg-algebra then since $A(X,f)$ is of finite global dimension, every finitely generated module is of the form $M\cong \Hom^\bullet(\L,\F)$ for some $\F\in D^b_G(X;k)$. Then monoidality of $\T_i$ follows from the corresponding statement at the sheaf level, which is easy to check. This check appears in \cite{kato:monoidal,geom:monoidal}, from which monoidality is deduced in characteristic zero via the formality of $A(X,f)$. Unfortunately, formality of $A(X,f)$ is currently unknown when $k$ has positive characteristic. 


%
%
%
%
%
For $i\in I$ there is an automorphism $T_i$ of the entire quantum group (denoted $T'_{i,+}$ in \cite{bookoflusztig}). It is customarily used to construct PBW bases. When restricted to $\f\subset U_q(\g)$, this induces an isomorphism of subalgebras $T_i\map{\ker(r_i)}{\ker(_ir)}$ \cite[Prop 38.1.6]{bookoflusztig}, where $_ir$ is defined like $r_i$ but with multiplication in the other order, so that it is the decategorification of $\Res_{\nu-i,i}$. Our final result is that our equivalence $\T_i$ decategorifies to $T_i$. 

 \begin{theorem}
  The equivalence of categories $\T_i \map{A(U_i(\nu),f)\mods}{A(_iU(s_i\nu),f)\mods}$ induces the isomorphism $T_i\map{\ker(r_i)}{\ker(_ir)}$ at the level of Grothendieck groups.
 \end{theorem}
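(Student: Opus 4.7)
The plan is to match $K_0(\T_i)$ with Lusztig's braid symmetry $T_i$ on a common generating set. Both are $\Z[q,q^{-1}]$-linear isomorphisms $\ker(r_i)\to \ker({}_ir)$ (the previous proposition identifies the source with $K_0(A(U_i(\nu),f)\mods)$), so it suffices to check agreement on a spanning family. A convenient one is the set of classes $[L(\P)]$ of simple modules corresponding to indecomposable parity sheaves $\P$ on $U_i(\nu)/G_\nu$; by $(\heartsuit)$ applied to the open substack, these biject with the $G_\nu$-orbits in $U_i(\nu)$, equivalently with Kostant partitions of $\nu$ not involving $\alpha_i$ as a summand.

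First I would unwind the construction of $\T_i$ from Theorem \ref{thm:morita}. The Morita equivalence is implemented by the algebra isomorphism $\homb(\P_i,\P_i)\cong\homb({}_i\P,{}_i\P)$ induced by Corollary \ref{stackiso}, with $\P_i$ and ${}_i\P$ normalized to be Verdier self-dual. Transporting parity sheaves along the stack isomorphism moves each indecomposable summand of $\P_i$, labeled by an orbit $\overline{\O_\la}\subset U_i(\nu)$, to the indecomposable summand of ${}_i\P$ labeled by the orbit obtained by applying the BGP reflection to the underlying quiver representation $M(\la)$. Thus $\T_i[L(\P)]=[L(\P')]$, where $\P'$ is supported on the BGP-image of $\supp\P$, and the combinatorial effect on orbit labels is exactly the $s_i$-action on the root system acting on Kostant partitions.

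Second, I would invoke the geometric realization of Lusztig's $T_i$. Fixing a reduced expression of $w_0$ adapted to an orientation of $Q$ having $i$ as a sink, the PBW-type basis of $\ker(r_i)$ given by classes of standard modules $[\D(\la)]$ with $n_1=0$ matches (up to the normalizations of \cite{bkm}) a basis on which Lusztig's $T_i$ acts by the same BGP-induced bijection with the analogous basis of $\ker({}_ir)$ for the reflected orientation. In characteristic zero this identification is precisely the content of \cite{katopbw}; crucially, that matching is K-theoretic and purely geometric, requiring only the stack isomorphism and the parity-sheaf description of simples --- both of which are available in all characteristics by Corollary \ref{cor:feven}, Theorem \ref{thm:klrhw} and $(\heartsuit)$.

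The main obstacle will be ensuring that the grading-shift normalizations match, so that $K_0(\T_i)$ equals $T_i$ on the nose rather than a $q$-rescaled or sign-twisted variant. The Verdier self-dual choice for $\P_i$ and ${}_i\P$ in the proof of Theorem \ref{thm:morita} is what makes the Morita bimodule carry no net grading shift, so that $K_0(\T_i)$ is an isomorphism of $\Z[q,q^{-1}]$-modules intertwining the natural bar-involutions on $\f$. Alternatively, as noted in the discussion following Theorem \ref{thm:morita}, one may identify $\T_i$ with the algebraic construction of \cite{braid}, for which compatibility with Lusztig's $T_i$ on Grothendieck groups is built in; this sidesteps the bookkeeping of shifts entirely.
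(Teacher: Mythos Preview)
Your overall strategy coincides with the paper's: identify $K_0(\T_i)$ and Lusztig's $T_i$ by checking they act identically on a basis coming from the geometry, and use the Verdier self-dual normalisation of $\P_i$, ${}_i\P$ to kill any stray powers of $q$. The paper carries this out with the standard modules: since they are intrinsic to the polynomial highest weight structure (with its partial order), any equivalence of polynomial highest weight categories must send standards to standards, and one then only has to check that the induced bijection on labels agrees with the BGP bijection, which matches how $T_i$ acts on PBW bases.

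Where your write-up wobbles is the switch of basis between paragraphs. In paragraph~2 you compute $\T_i$ on the classes $[L(\P)]$ of simples; in paragraph~3 you describe $T_i$ on the classes $[\D(\la)]$ of standards. These are two different bases of the same Grothendieck group, so knowing each map on its own basis does not by itself show the maps agree. You would still need either that $\T_i$ sends standards to standards with the BGP relabelling, or that $T_i$ sends simples to simples with that relabelling. The latter is false in general ($T_i$ does not preserve the canonical basis, and in positive characteristic the classes of simples need not even be canonical basis elements), so the former is the step you actually want. The clean way to get it, which is exactly what the paper does, is to note that standard modules are categorically characterised inside a polynomial highest weight category, hence preserved by any equivalence; the detour through simples can be dropped. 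Your fallback of appealing to \cite{braid} via \cite{fujita} is legitimate but, as you say, less self-contained.
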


 \begin{proof}
 The classes of the standard modules comprise a basis for the Grothendieck groups. Since the standard modules are intrinsic to the polynomial highest weight category with its partial order, $\T_i$ sends a standard module to a standard module.
 
 By \cite[\S 3]{bkm}, the classes of the standard modules are elements of the PBW basis. The isomorphism $T_i\map{\ker(r_i)}{\ker(_ir)}$ maps one PBW basis to the other. This completes the proof up to powers of $q$ and a permutation.
 By ensuring that our Morita equivalences always use the appropriate self-dual sheaves, we ensure that there are no powers of $q$ to deal with. To confirm there is no permutation, we consider the geometric description of standard modules to see that each standard module gets sent to the appropriate standard module under $\T_i$.
 \end{proof}

%
%
%
%
%

\end{document}